\newcommand{\gb}{\beta}
\newcommand{\ga}{\alpha}
\newcommand{\gd}{\delta}
\newcommand{\gD}{\Delta}
\renewcommand{\ge}{\epsilon}
\newcommand{\fg}{{\mathfrak g}}
\newcommand{\fh}{{\mathfrak h}}
\newcommand{\fl}{{\mathfrak l}}
\newcommand{\fn}{{\mathfrak n}}
\newcommand{\fq}{{\mathfrak q}}
\newcommand{\fz}{{\mathfrak z}}
\newcommand{\f}{\mathfrak}
\newcommand{\eC}{\EuScript{C}}
\newcommand{\eD}{\EuScript{D}}
\newcommand{\eM}{\EuScript{M}}
\newcommand{\eX}{\EuScript{X}}
\newcommand{\nbar}{\bar{n}}                 
\newcommand{\Nbar}{\bar{N}}
\newcommand{\C}{\mathbb{C}}          
\newcommand{\Z}{\text{\bf Z}}
\newtheorem{Thm}[equation]{Theorem}
\newtheorem{Lem}[equation]{Lemma}
\newtheorem{Cor}[equation]{Corollary}
\newtheorem{Prop}[equation]{Proposition}
\newtheorem{Def}[equation]{Definition}
\numberwithin{equation}{section}
\newcommand{\be}{\begin{equation}}
\newcommand{\beu}{\begin{equation*}}
\newcommand{\acts}{ {\raisebox{1pt} {$\scriptstyle \bullet$} } }
\newcommand{\ad}{\text{ad}}
\newcommand{\Ad}{\text{Ad}}
\newcommand{\Cal}{\mathcal}
\newcommand{\IP}[2]{\langle#1 , #2\rangle}     
\newcommand{\tOmega}{\tilde{\Omega}}
\begin{document}

\bibliographystyle{amsplain}

\baselineskip=16pt

\title[Conformally Invariant Systems]
{A System of Third-Order Differential Operators Conformally Invariant under $\f{so}(8,\mathbb{C})$}

\author{Toshihisa Kubo}
\address{Department of Mathematics,
             Oklahoma State University,
              Stillwater, Oklahoma 74078}
\email{ toskubo@math.okstate.edu}

\subjclass[2010]{Primary 22E46; Secondary 22E47, 17B10}
\keywords{differential intertwining operators, generalized Verma modules, real flag manifolds}

\begin{abstract}
In earlier work, Barchini, Kable, and Zierau constructed a number of conformally
invariant systems of differential operators associated to Heisenberg parabolic
subalgebras in simple Lie algebras. The construction was systematic,
but the existence of such a system was left open in several anomalous cases.
Here, a conformally invariant system is shown to exist in the most interesting
of these remaining cases. The construction may also be interpreted as 
giving an explicit homomorphism between generalized Verma modules
for the Lie algebra of type $D_4$.
\end{abstract}
\maketitle

\section{Introduction}

Conformally invariant systems of differential operators 
on a manifold $M$ on which a Lie algebra $\fg$ acts by first order differential operators
were studied by Barchini, Kable, and Zierau in \cite{BKZ08} and \cite{BKZ09}.   
Loosely speaking, a conformally invariant system is a list of differential operators 
$D_1, \ldots, D_m$ that satisfies the bracket identity
\beu
[\Pi(X), D_j] = \sum_{i}C_{ij}(X)D_i,
\end{equation*}
where $\Pi(X)$ is the differential operator corresponding to $X \in \fg$
and $C_{ij}(X)$ are smooth functions on $M$.
We shall give the definition of conformally invariant systems more precisely
in Section 2. 
While a general theory of conformally invariant systems is developed in \cite{BKZ09},  
examples of such systems of differential operators 
associated to the Heisenberg parabolic subalgebras
of any complex simple Lie algebras are constructed in \cite{BKZ08}.
The purpose of this paper is to answer a question, left open in 
\cite{BKZ08}, concerning the existence of a certain conformally invariant
system of third-order differential operators. 
This is done by constructing the required system.
This result may be interpreted as giving an explicit homomorphism between
two generalized Verma modules, one of which is non-scalar. 
The problem of constructing and classifying homomorphisms between 
scalar generalized Verma modules has received a lot of attention;
for recent work, see, for example, \cite{Matumoto06}.
Much less is known about maps between generalized Verma modules 
that are not necessarily scalar.


In order to explain our main results in this paper,
we briefly review the results of \cite{BKZ08} here. 
To begin with, 
let $\fg$ be a complex simple Lie algebra and $\fq = \fl \oplus \fn$
be the parabolic subalgebra of Heisenberg type; that is,
$\fn$ is a two-step nilpotent algebra with one-dimensional center. 
We denote by $\gamma$ the highest root of $\fg$.
For each root $\ga$ let $\{X_{-\ga}, H_{\ga}, X_\ga\}$
be a corresponding  $\f{sl}(2)$-triple,
normalized as in Section 2 of \cite{BKZ08}.
Then $\ad(H_\gamma)$ on $\fg$ has eigenvalues $-2$, $-1$, $0$, $1$, $2$, 
and the corresponding eigenspace decomposition of $\fg$ is denoted by
\beu
\fg = \fz(\bar\fn) \oplus V^- \oplus \fl \oplus V^+ \oplus \fz(\fn).
\end{equation*}

Let $\mathbb{D}[\fn]$ be the Weyl algebra of $\fn$.
Then each system constructed in \cite{BKZ08} derives from a $\C$-linear map 
$\Omega_k: \fg(2-k) \to \mathbb{D}[\fn]$ with $1\leq k \leq 4$
and $\fg(2-k)$ the $2-k$ eigenspace of $\ad(H_\gamma)$.
Let $\Pi_s: \fg \to \mathbb{D}[\fn]$
be the Lie algebra homomorphism constructed in Section 4 in \cite{BKZ08}.
Here $s$ is a complex parameter.
We say that the $\Omega_k$ system has special value $s_0$
when the system is conformally invariant for $\Pi_{s_0}$.

In \cite{BKZ08} the special values of $s$ are determined
for the $\Omega_k$ systems with $k=1,2,4$
for all complex simple Lie algebras, but 
only exceptional cases are considered for the $\Omega_3$ system.
A table in Section 8.10 in \cite{BKZ08} lists the special values of 
$s$. The reader may want to notice that
the entries in the columns for the systems 
$\Omega_2^{\text{big}}$ and $\Omega_2^{\text{small}}$ 
for types $B_r$ and $C_r$ should be transposed.
Theorem 21 in \cite{BKZ09} then shows that the $\Omega_3$ system
does not exist for $A_r$ with $r \geq 3$, $B_r$ with $r\geq 3$, and 
$D_r$ with $r \geq 5$.
There remain two open cases, namely, the $\Omega_3$ system
for type $A_2$ and the $\Omega_3$ system for type $D_4$. 
The aim of this paper is to show 
that the $\Omega_3$ system does exist for type $D_4$
(see Theorem \ref{Thm311}).
In order to achieve the result we use several facts 
from both \cite{BKZ08} and \cite{BKZ09}.
By using these facts, we significantly 
reduce the amount of computation to show the existence of the system.
In the other remaining case, for the algebra of type $A_2$, 
the Heisenberg parabolic subalgebra coincides with the Borel subalgebra,
and the existence of the $\Omega_3$ system(s) follows from 
the standard reducibility result for Verma modules 
(see for instance \cite[Theorem 7.6.23]{Dix96}).


There are two differences between our conventions here and those used in \cite{BKZ08}.
One is that we choose the parabolic $Q_0 = L_0 N_0$ for the real flag manifold,
while the opposite parabolic $\bar Q_0 = L_0 \bar N_0$ is chosen in \cite{BKZ08}.
Because of this, our special values of $s$
are of the form $s = -s_0$, where $s_0$ 
are the special values shown in Section 8.10 in \cite{BKZ08}.
The other is that we identify $(V^+)^*$ with $V^-$ by using the Killing form,
while $(V^+)^*$  in \cite{BKZ08} is identified with $V^+$
by using the non-degenerate alternating form $\IP{\cdot}{\cdot}$ on $V^+$
defined by $[X_1, X_2] = \IP{X_1}{X_2}X_\gamma$ for $X_1, X_2 \in V^+$. 
Because of this difference the right action $R$, which will be defined
in Section 2, will play the role played by $\Omega_1$ in \cite{BKZ08}.
In addition to these notational differences, there are also some methodological 
differences between \cite{BKZ08} and what we do here. 
These stem from the fact that we make systematic use of the results
of \cite{BKZ09} to streamline the process of proving conformal invariance. 

We now outline the remainder of this paper.
In Section 2, we review the setting and results of Section 5 in \cite{BKZ09},
simultaneously specializing them to the situation considered here.
It would be helpful for the reader to be familiar with \cite{BKZ09},
particularly the concepts of $\fg$-manifold and $\fg$-bundle, 
at this point; the definitions may be found on pp. 790-791 of \cite{BKZ09}.
In Section 3, we specialize further by taking $\fg$ to be of type $D_4$.
We fix a suitable Chevalley basis and give the definition of the 
$\tilde{\Omega}_3$ system whose conformal invariance is to be established.
A remark on notation might be helpful here. In \cite{BKZ08},
a system $\Omega'_3$ is initially defined. It is then shown to decompose
as a sum of a leading term $\tOmega_3$ and a correction term $C_3$. 
These two are recombined with different coefficients to give $\Omega_3$,
which is finally shown to be conformally invariant for exceptional algebras.
For type $D_4$, it emerges that $\Omega_3 = \tOmega_3$,
so that the correction term $C_3$ is discarded completely. 
For this reason, we do not recapitulate the process. Rather,
we simply introduce $\tOmega_3$ and proceed to show that 
it is conformally invariant. This is done in Theorem 3.11, which is our main result.

\vskip 0.1in

\noindent \textbf{Acknowledgment.} The author would like to thank 
Dr. Anthony Kable for his valuable suggestions and comments on this paper.

\section{Conformally Invariant Systems}

The purpose of this section is to introduce the notion of conformally invariant systems.
Let $G_0$ be a connected real semisimple Lie group with Lie algebra $\fg_0$ and 
complexified Lie algebra $\fg$. Let $Q_0$ be a parabolic subgroup of $G_0$ and 
$Q_0 = L_0N_0$ a Levi decomposition of $Q_0$. By the Bruhat decomposition,
the subset $\bar{N_0}Q_0$ of $G_0$ is open and dense in $G_0$,
where $\bar{N_0}$ is the nilpotent subgroup of $G_0$ opposite to $N_0$.
Let $\bar\fn$ and $\fq$ be the complexifications of 
the Lie algebras of $\bar N_0$ and $Q_0$, respectively;
we have the direct sum $\fg = \bar \fn \oplus \fq$.
For $Y \in \fg$, write $Y = Y_{\bar \fn} + Y_{\fq}$ for the decomposition 
of $Y$ in this direct sum.
Similarly, write the Bruhat decomposition of $g \in \bar N_0 Q_0$ as 
$g= \mathbf{\bar n}(g)\mathbf{q}(g)$ with $\mathbf{\bar n}(g) \in \bar N_0$ and 
$\mathbf{q}(g) \in Q_0$. Note that for $Y \in \fg_0$ we have
\beu
Y_{\bar \fn} = \frac{d}{dt} \mathbf{\bar n}(\exp(tY)) \big|_{t=0},
\end{equation*}
and a similar equality holds for $Y_{\fq}$.

We consider the homogeneous space $G_0/Q_0$. Let $\C_{\chi^{-s}}$ be 
the one-dimensional representation of $L_0$ with character $\chi^{-s}$. 
The representation $\chi^{-s}$ is extended to a representation of $Q_0$
by making it trivial on $N_0$.
For any manifold $M$, denote by $C^\infty(M,\C_{\chi^{-s}})$ the smooth
functions from $M$ to $\C_{\chi^{-s}}$. The group $G_0$ acts on the space
\beu
C^\infty_{\chi}(G_0/Q_0, \C_{\chi^{-s}}) = \{ F \in C^\infty(G_0, \C_{\chi^{-s}}) \; |\;
\text{$F(gq) = \chi^{-s}(q^{-1})F(g)$ for all $q \in Q_0$ and $g \in G_0$} \}
\end{equation*}
\vskip 0.1in
\noindent by left translation,
and the action $\Pi_s$ of $\fg$ on $C^\infty_\chi(G_0/Q_0, \C_{\chi^{-s}})$
arising from this action is given by
\beu
(\Pi_s(Y) \acts F)(g) = \frac{d}{dt}F(\exp(-tY)g)\big|_{t=0}
\end{equation*}
\vskip 0.1in

\noindent for $Y \in \fg_0$. Here the dot $\bullet$ denotes the action of 
$\Pi_s(Y)$. This action is extended $\C$-linearly
to $\fg$ and then naturally to the universal enveloping algebra $\Cal{U}(\fg)$.
We use the same symbols for the extended actions.

The restriction map 
$C^\infty_{\chi}(G_0/Q_0, \C_{\chi^{-s}}) \to C^\infty(\bar N_0, \C_{\chi^{-s}})$
is an injection whose image is dense for the smooth topology.
We may define the action 
of $\Cal{U}(\fg)$ on the image of the restriction map by 
$\Pi_s(u) \acts f = \big(\Pi_s(u) \acts F\big)|_{\bar N_0}$ for $u \in \Cal{U}(\fg)$ 
and $F \in C^\infty_{\chi}(G_0/Q_0, \C_{\chi^{-s}})$ with $f = F|_{\bar N_0}$. 
Define a right action
$R$ of $\Cal{U}(\bar \fn)$ on $C^\infty(\bar N_0, \C_{\chi^{-s}})$ by
\beu
\big( R(X) \acts f \big)(\nbar) = \frac{d}{dt} f\big( \nbar \exp(tX) \big) \big|_{t=0}
\end{equation*}
\vskip 0.1in

\noindent for $X \in \bar\fn_0$ and $f \in C^\infty(\bar N_0 ,\C_{\chi^{-s}})$. 
A direct computation shows that
\begin{equation}\label{Eqn21}
\big(\Pi_s(Y) \acts f \big)(\nbar) = -sd\chi \big( (\Ad(\nbar^{-1})Y)_\fq \big) f(\nbar) 
-\big( R \big( (\Ad(\nbar^{-1})Y)_{\bar \fn} \big) \acts f \big)(\nbar)
\end{equation}
\vskip 0.1in

\noindent for $Y \in \fg$ and $f$ in the image of the restriction map 
$C^\infty_{\chi}(G_0/Q_0, \C_{\chi^{-s}}) \to C^\infty(\bar N_0, \C_{\chi^{-s}})$. 
This equation implies that the representation $\Pi_s$ extends 
to a representation of $\Cal{U}(\fg)$ on 
the whole space $C^\infty(\bar{N_0} , \C_{\chi^{-s}})$. 
Note that for all $Y \in \fg$,
the linear map $\Pi_s(Y)$ is in $C^\infty(\bar{N_0}, \C_{\chi^{-s}}) \oplus \eX(\bar{N_0})$,
where $\eX(\bar{N_0})$ is the space of smooth vector fields on $\bar{N_0}$.
This property of $\Pi_s(Y)$
makes $\bar{N_0}$ a \emph{$\fg$-manifold} in the sense of \cite[page 790]{BKZ09}. 

Let $\Cal{L}_{-s}$ be the trivial bundle of $\bar{N_0}$ with fiber $\C_{\chi^{-s}}$.
Then the space of smooth sections of $\Cal{L}_{-s}$
is identified with $C^\infty(\bar{N_0} , \C_{\chi^{-s}})$. An operator
$D : C^\infty(\bar{N_0} , \C_{\chi^{-s}}) \to C^\infty(\bar{N_0} , \C_{\chi^{-s}})$ 
is said to be a 
\emph{differential operator} if it is of the form 

\beu
D = \sum_{|\ga| \leq k} a_\ga \frac{\partial^\ga}{\partial x^\ga},
\end{equation*}
\vskip 0.1in

\noindent where $a_\ga \in C^\infty(\bar{N_0},\C_{\chi^{-s}})$, $k \in \Z_{\geq 0}$,
and multi-index notation is being used.
 
Denote the space of differential operators by $\mathbb{D}(\Cal{L}_{-s})$.
The elements of $C^\infty(\bar{N_0},\C_{\chi^{-s}})$ may be regarded as 
differential operators by identifying them with 
the multiplication operator they induce.
A computation shows that
in $\mathbb{D}(\Cal{L}_{-s})$,
\beu
\big( [\Pi_s(Y),f] \big)(\nbar) =-\big( R \big( (\Ad(\nbar^{-1})Y)_{\bar \fn} \big) \acts f \big)(\nbar) 
\end{equation*}
\vskip 0.1in

\noindent for $Y \in \fg$ and $f \in C^\infty(\bar{N_0})$. 
This verifies that $\Pi_s$ gives $\Cal{L}_{-s}$ the structure of a 
\emph{$\fg$-bundle} in the sense of \cite[page 791]{BKZ09}.

\begin{Def}
Let $\Pi_s$ and $\Cal{L}_{-s}$ be as above.
A \emph{conformally invariant system}
on $\Cal{L}_{-s}$ with respect to $\Pi_s$ is a list of differential operators
$D_1, \ldots, D_m \in \mathbb{D}(\Cal{L}_{-s})$ so that 
the following two conditions are satisfied:
\vskip 0.1in

\begin{enumerate}
\item[(C1)] The list $ D_1, \ldots, D_m $ is linearly independent at each point of $\bar{N_0}$. 
\item[(C2)] For each $Y \in \fg$ there is an $m \times m$ matrix $C(Y)$ of smooth functions
on $\bar{N_0}$ so that, in $\mathbb{D}(\Cal{L}_{-s})$,
\beu
[\Pi_s(Y), D_j] = \sum_i C_{ij}(Y)D_i.
\end{equation*}
\end{enumerate}
\vskip 0.1in
\end{Def}
\noindent The map $C : \fg \to M_{m \times m}(C^\infty(\bar{N_0}))$ is  called the 
\emph{structure operator}.

Now we define 
\beu
\mathbb{D}(\Cal{L}_{-s})^{\bar \fn}
=\{ D \in \mathbb{D}(\Cal{L}_{-s}) \; | \; [\Pi_s(X), D] = 0 
\text{ for all $X \in \bar \fn$} \}.
\end{equation*}
\vskip 0.1in

\begin{Prop}\cite[Proposition 13]{BKZ09}\label{Prop22}
Let $D_1, \ldots, D_m$ be a list of operators in $\mathbb{D}(\Cal{L}_{-s})^{\bar \fn}$.
Suppose that the list is linearly independent at $e$ and that there is a map
$b: \fg \to \f{gl}(m,\C)$ such that

\beu
\big([\Pi_s(Y), D_i] \acts f\big)(e) = \sum_{j=1}^m b(Y)_{ji}(D_j \acts f)(e)
\end{equation*} 
\vskip 0.1in
\noindent for all $Y \in \fg, \; f \in C^\infty(\bar{N_0}, \C_{\chi^{-s}})$, and $1\leq i \leq m$. 
Then $D_1, \ldots, D_m$ is a conformally invariant system on $\Cal{L}_{-s}$.
The structure operator of the system is given by 
$C(Y)(\nbar) = b(\emph{\Ad}(\nbar^{-1})Y)$ for all $\nbar \in \bar{N_0}$ and $Y \in \fg$.
\end{Prop}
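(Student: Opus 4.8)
The plan is to verify conditions (C1) and (C2) of the definition of conformally invariant system by exploiting the fact that the $D_j$ already lie in $\mathbb{D}(\Cal{L}_{-s})^{\bar\fn}$, so that the bracket $[\Pi_s(X),D_j]$ vanishes for $X\in\bar\fn$ and we only need control of the brackets with a spanning set of a complement. Condition (C1) is given by hypothesis at the single point $e$, so the first step is to propagate linear independence at $e$ to linear independence at every $\nbar\in\bar{N_0}$; this follows because $\Pi_s(\Cal{U}(\bar\fn))$ acts transitively enough (via the exponential chart on $\bar{N_0}$) that a dependence at some point could be transported to $e$, contradicting the hypothesis. More precisely, I would use that conjugation by $\mathbf{\bar n}$ together with the $\bar\fn$-invariance of the $D_j$ shows the leading symbols are translation-covariant, so nonvanishing of the relevant Wronskian-type determinant at $e$ forces nonvanishing everywhere.

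The substantive step is (C2): producing the structure operator. Here I would write, for arbitrary $Y\in\fg$ and $\nbar\in\bar{N_0}$, the decomposition $\Ad(\nbar^{-1})Y = (\Ad(\nbar^{-1})Y)_{\bar\fn} + (\Ad(\nbar^{-1})Y)_\fq$ afforded by $\fg = \bar\fn\oplus\fq$. Using \eqref{Eqn21} and the $\bar\fn$-invariance of each $D_i$, the bracket $[\Pi_s(Y),D_i]$ evaluated against $f$ at a point $\nbar$ should reduce, after translating the problem to $e$ by the $\fg$-bundle structure, to the bracket $[\Pi_s(\Ad(\nbar^{-1})Y),D_i]$ evaluated at $e$. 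The $\bar\fn$-part of $\Ad(\nbar^{-1})Y$ contributes nothing to the bracket at $e$ because $D_i\in\mathbb{D}(\Cal{L}_{-s})^{\bar\fn}$, so only the $\fq$-part survives; and on the $\fq$-part the hypothesized map $b$ gives exactly $\sum_j b(\Ad(\nbar^{-1})Y)_{ji}(D_j\acts f)(e)$. Translating back, this yields $[\Pi_s(Y),D_i] = \sum_j C_{ij}(Y)D_i$ with $C(Y)(\nbar) = b(\Ad(\nbar^{-1})Y)$, which is the claimed formula, and the entries are smooth in $\nbar$ since $\nbar\mapsto\Ad(\nbar^{-1})Y$ is polynomial and $b$ is linear.

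The main obstacle I anticipate is the bookkeeping in the translation step: one must make precise the sense in which evaluating a bracket of operators in $\mathbb{D}(\Cal{L}_{-s})^{\bar\fn}$ at an arbitrary point $\nbar$ is equivalent to evaluating a twisted bracket at $e$. This uses the left-translation action of $\bar N_0$ on $C^\infty(\bar{N_0},\C_{\chi^{-s}})$ together with the intertwining relation between $\Pi_s$ and this translation, and the fact that $D_i$ commutes with $\Pi_s(X)$ for $X\in\bar\fn$ is what makes $D_i$ itself (essentially) translation-equivariant, so that the whole identity at $\nbar$ is the image under translation of the identity at $e$. Once this equivariance is set up cleanly, the rest is formal. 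I would also note that the smoothness and linear-independence-everywhere conclusions are genuinely consequences of, not merely consistent with, the pointwise hypotheses at $e$, so the proposition is really a rigidity statement: $\bar\fn$-invariance upgrades data at one point to a conformally invariant system globally.
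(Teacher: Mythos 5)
Your argument is correct in outline, but note that the paper itself offers no proof of this statement: it is quoted verbatim from \cite[Proposition 13]{BKZ09}, so there is no in-paper proof to compare against. Your reconstruction is the standard (and, as far as I can tell, the intended) argument. The one thing to make explicit is the fact, recorded in the paper as the isomorphism (\ref{Eqn23}), that $\mathbb{D}(\Cal{L}_{-s})^{\bar\fn}=R(\Cal{U}(\bar\fn))$, i.e.\ the $D_j$ are exactly the left-invariant operators $R(u_j)$. With that in hand, both of your steps become one-line computations: for (C1), $(D_j\acts f)(\nbar)=(D_j\acts \lambda_{\nbar}^{-1}f)(e)$ with $\lambda_{\nbar}$ left translation, so a dependence relation among the functionals $(D_j)_{\nbar}$ transports directly to one among the $(D_j)_e$ --- there is no need to pass through leading symbols or a Wronskian, and indeed linear independence at a point is a statement about the full functionals, not their symbols; for (C2), the intertwining relation $\lambda_{\nbar}^{-1}\Pi_s(Y)\lambda_{\nbar}=\Pi_s(\Ad(\nbar^{-1})Y)$ together with $\lambda_{\nbar}^{-1}D_j\lambda_{\nbar}=D_j$ gives $([\Pi_s(Y),D_i]\acts f)(\nbar)=([\Pi_s(\Ad(\nbar^{-1})Y),D_i]\acts\lambda_{\nbar}^{-1}f)(e)$, to which the hypothesis applies verbatim (the remark that the $\bar\fn$-component of $\Ad(\nbar^{-1})Y$ drops out is true but not needed, since the hypothesis is assumed for all $Y\in\fg$). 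Finally, the displayed conclusion in your second paragraph should read $[\Pi_s(Y),D_i]=\sum_j C_{ji}(Y)D_j$ rather than $\sum_j C_{ij}(Y)D_i$; this is a typo, not a mathematical gap.
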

As shown on p.802 in \cite{BKZ09} the differential operators in 
$\mathbb{D}(\Cal{L}_{-s})^{\bar \fn}$ can be described in terms of 
elements of the generalized Verma module
\beu
\eM(\C_{sd\chi}) = \Cal{U}(\fg) \otimes_{\Cal{U}(\fq)}\C_{sd\chi},
\end{equation*}
where $\C_{sd\chi}$ is the $\fq$-module derived from 
the $Q_0$-representation $(\chi^{s}, \C)$. 
By identifying $\eM(\C_{sd\chi}) $ as $\Cal{U}(\bar\fn) \otimes \C_{sd\chi}$, 
the map $\eM(\C_{sd\chi})  \to \Cal{U}(\bar \fn)$ given by 
$u \otimes 1 \mapsto u$ is an isomorphism. 
The composition
\begin{equation}\label{Eqn23}
\eM(\C_{sd\chi})  \to \Cal{U}(\bar \fn) \to \mathbb{D}(\Cal{L}_{-s})^{\bar \fn}
\end{equation}
is then a vector-space isomorphism, where 
the map $\Cal{U}(\bar \fn) \to \mathbb{D}(\Cal{L}_{-s})^{\bar \fn}$ is given by
$u \mapsto R(u)$.

Suppose that $f \in C^\infty(\bar{N_0},\C_{\chi^{-s}})$ and $l \in L_0$. Then we define 
an action of $L_0$ on $C^\infty(\bar{N_0},\C_{\chi^{-s}})$ by
\beu
(l \cdot f)(\nbar) = \chi^{-s}(l)f(l^{-1}\nbar l).
\end{equation*} 
This action agrees with the action of $L_0$ by left translation on the image 
of the restriction map $C^\infty_{\chi}(G_0/Q_0, \C_{\chi^{-s}}) \to 
C^\infty(\bar N_0, \C_{\chi^{-s}})$.
In terms of this action we define an action of $L_0$ on $\mathbb{D}(\Cal{L}_{-s})$ by
\beu
(l \cdot D) \acts f = l \cdot \big(D \acts (l^{-1} \cdot f)\big).
\end{equation*}
\noindent One can check that we have $l \cdot R(u) = R(\Ad(l)u)$ for $l \in L_0$ and 
$u \in \Cal{U}(\bar \fn)$; in particular this $L_0$-action stabilizes the subspace
$\mathbb{D}(\Cal{L}_{-s})^{\bar \fn}$. Also $L_0$ acts on $\eM(\C_{sd\chi}) $ by 
$l \cdot (u \otimes z) = \Ad(l)u \otimes z$, and with these actions,
the isomorphism (\ref{Eqn23}) is $L_0$-equivariant.
For $D\in \mathbb{D}(\Cal{L}_{-s})$, we denote by $D_{\nbar}$ the linear functional
$f \mapsto (D\acts f)(\nbar)$ for $f \in C^\infty(\Nbar_0, \C_{\chi^{-s}})$. 
The following result is the specialization of Theorem 15 in \cite{BKZ09}
to the present situation.

\begin{Thm}\label{Thm24}
Suppose that $F$ is a finite-dimensional $\fq$-submodule of 
the generalized Verma module $\eM(\C_{sd\chi}) $. Let $f_1, \ldots, f_k$ be a basis of $F$
and define constants $a_{ri}(Y)$ by
\beu
Yf_i = \sum_{r=1}^k a_{ri}(Y)f_r
\end{equation*}
for $1\leq i \leq k$ and $Y \in \fq$. Let $D_1, \ldots, D_k \in 
\mathbb{D}(\Cal{L}_{-s})^{\bar \fn}$ correspond to the elements 
$f_1, \ldots, f_k \in F$. Then
\beu
[\Pi_s(Y), D_i]_{\nbar} 
= \sum_{r=1}^k a_{ri}\big( (\emph{Ad}(\nbar ^{-1})Y)_\fq\big) (D_r)_{\nbar}
-sd\chi\big( (\emph{Ad}(\nbar^{-1})Y)_\fq\big) (D_i)_{\nbar}
\end{equation*}
for all $Y \in \fg$, $1\leq i \leq k$, and $\nbar \in \Nbar_0$.
\end{Thm}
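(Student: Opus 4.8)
The plan is to deduce Theorem~\ref{Thm24} directly from Theorem~15 of \cite{BKZ09} by unwinding how the present setup matches theirs. First I would recall the two computational ingredients already assembled in Section~2: the isomorphism \eqref{Eqn23} between $\eM(\C_{sd\chi})$ and $\mathbb{D}(\Cal{L}_{-s})^{\bar\fn}$, under which $f_i \leftrightarrow D_i$, and the pointwise bracket formula for multiplication operators. The strategy is to compute $[\Pi_s(Y),D_i]_{\nbar}$ by first reducing to the identity element via the $L_0$-equivariance of \eqref{Eqn23} and the conjugation relation $\Ad(\nbar^{-1})$, and then to evaluate at $e$ using formula \eqref{Eqn21}. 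Concretely, $\Pi_s(Y)$ splits, through \eqref{Eqn21}, into a multiplication part governed by $-sd\chi\big((\Ad(\nbar^{-1})Y)_\fq\big)$ and a vector-field part governed by $R\big((\Ad(\nbar^{-1})Y)_{\bar\fn}\big)$; only the former survives after the commutator is evaluated against the $D_i$, which lie in $\mathbb{D}(\Cal{L}_{-s})^{\bar\fn}$ and hence commute with $R(\bar\fn)$.

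The key steps, in order, are: (1) fix $\nbar \in \Nbar_0$ and $Y \in \fg$, and write $Z = \Ad(\nbar^{-1})Y$, so $Z = Z_{\bar\fn} + Z_\fq$; (2) use \eqref{Eqn21} to express $\Pi_s(Y)$ acting near $\nbar$ as $-sd\chi(Z_\fq)$ times the identity plus $-R(Z_{\bar\fn})$, modulo higher-order vanishing at $\nbar$; (3) observe that since each $D_i \in \mathbb{D}(\Cal{L}_{-s})^{\bar\fn}$, the term $[R(Z_{\bar\fn}),D_i]$ contributes $0$ after evaluation, leaving $[\Pi_s(Y),D_i]_{\nbar} = -sd\chi(Z_\fq)(D_i)_{\nbar} + (\text{contribution of the }\fq\text{-action on }F)$; (4) identify the remaining $\fq$-contribution: because $D_i$ corresponds to $f_i \in F$ under the $L_0$-equivariant isomorphism \eqref{Eqn23}, and $F$ is $\fq$-stable with $Yf_i = \sum_r a_{ri}(Y)f_r$ for $Y \in \fq$, the action of the $\fq$-part of $\Pi_s(Y)$ transports to $\sum_r a_{ri}(Z_\fq)(D_r)_{\nbar}$; (5) combine (3) and (4) to obtain the stated formula. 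The precise bookkeeping of which piece of $\Pi_s(Y)$ produces the $a_{ri}$ term versus the $sd\chi$ term is exactly the content of Theorem~15 in \cite{BKZ09}, so the proof is essentially a translation.

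The main obstacle is step~(4): making rigorous the passage from ``$\fq$ acts on the finite-dimensional module $F \subseteq \eM(\C_{sd\chi})$'' to ``the corresponding differential operators satisfy the displayed commutation relation,'' keeping careful track of the twist by $d\chi$ that is built into the definition of the generalized Verma module $\eM(\C_{sd\chi}) = \Cal{U}(\fg)\otimes_{\Cal{U}(\fq)}\C_{sd\chi}$. The subtlety is that the $\fq$-module structure on $F$ already incorporates the character $sd\chi$ on the central-type directions, so one must be careful not to double-count it; the clean way is to note that the isomorphism \eqref{Eqn23} factors through $\Cal{U}(\bar\fn)$ via $u\otimes 1 \mapsto u \mapsto R(u)$, and that the $\fg$-action on $\eM(\C_{sd\chi})$ restricted to $\bar\fn$ matches $R$ while the $\fq$-action, after projecting back to $\Cal{U}(\bar\fn)$, produces precisely the $a_{ri}$ coefficients together with the separate $-sd\chi$ term. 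Since all of this is handled in \cite{BKZ09}, I would present the proof as a verification that the hypotheses of their Theorem~15 hold in the present normalization (in particular that $\bar{N_0}$ is a $\fg$-manifold and $\Cal{L}_{-s}$ a $\fg$-bundle, both already checked above), and then simply quote the conclusion, specialized to our choice of parabolic $Q_0 = L_0N_0$ and character $\chi^{-s}$.
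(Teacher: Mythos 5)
Your proposal matches the paper exactly: Theorem \ref{Thm24} is stated there without an independent proof, being presented as the specialization of Theorem 15 of \cite{BKZ09} to the present setting, with the required hypotheses (that $\bar{N_0}$ is a $\fg$-manifold, that $\Pi_s$ makes $\Cal{L}_{-s}$ a $\fg$-bundle, and the correspondence (\ref{Eqn23})) already verified earlier in Section 2. Your additional sketch of the internal mechanism is consistent with how that theorem is proved in \cite{BKZ09}, so quoting it as you propose is precisely what the paper does.
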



\section{The $\Omega_3$ System on $\f{so}(8,\C)$}

In this section, we specialize to the situation where $G_0$ is a real form 
of the group $SO(8,\C)$ that contains a real parabolic subgroup of Heisenberg type.
In this setting, we construct a system of differential operators on the bundle
$\Cal{L}_1$ and show that it is conformally invariant.
We first introduce some notation.

Let $\fg = \f{so}(8,\C)$.
Choose a Cartan subalgebra $\fh$ of $\fg$
and let $\gD$ be the set of roots of $\fg$ with respect to $\fh$. 
Fix $\gD^+$ a positive system and denote by $S$ 
the corresponding set of simple roots.
We denote the highest root by $\gamma$.
Let $B_\fg$ denote a positive multiple of the Killing form on $\fg$ and denote
by $(\cdot, \cdot)$ the corresponding inner product induced on $\fh^*$.
The normalization of $B_\fg$ will be specified below.
Let us write $||\ga||^2 = (\ga, \ga)$ for any $\ga \in \gD$.
For $\ga \in \gD$, we let $\fg_{\ga}$ 
be the root space of $\fg$ corresponding to $\ga$.
For any $\ad(\fh)$-invariant subspace $V \subset \fg$,
we denote by $\gD(V)$ the set of roots $\ga$ so that $\fg_\ga \subset V$.

It is known that we can choose $X_\ga \in \fg_\ga$ and $H_\ga \in \fh$ for each $\ga \in \gD$
in such a way that the following conditions hold. 
The reader may want to note that our normalizations are special cases of
those used in \cite{BKZ08}.
\vskip 0.1in

\begin{enumerate}
\item[(C1)] For each $\ga \in \gD^+$,  $\{X_{-\ga}, H_{\ga},X_\ga \}$ is an $\f{sl}(2)$-triple.
In particular, 
\beu
[X_\ga, X_{-\ga}] = H_\ga.
\end{equation*} \vskip 0.05in
\item[(C2)] For each $\ga, \gb \in \gD$, $[H_\ga, X_\gb] = \gb(H_\ga)X_\gb$.\vskip 0.05in
\item[(C3)] For $\ga \in \gD$ we have $B_\fg(X_\ga, X_{-\ga})$ = 1; in particular,
$(\ga, \ga) = 2$.\vskip 0.05in
\item[(C4)] For $\ga,\gb \in \gD$ we have 
$\gb(H_\ga) = (\gb,\ga)$.\vskip 0.05in 
\end{enumerate}
\vskip 0.1in

Let $\fq$ be the parabolic subalgebra of $\fg$ of Heisenberg type; that is,
the parabolic subalgebra 
corresponding to the subset $\{\ga \in S \; | \; (\ga,\gamma) = 0 \}$. 
Denote by $\fl$ the Levi factor of $\fq$ and by $\fn$ the
nilpotent radical of $\fq$.
Then the action of $\ad(H_\gamma)$ on $\fg$ has eigenvalues 
 $-2$, $-1$, $0$, $1$, $2$, and the corresponding eigenvalue decomposition of $\fg$ is 
denoted by
\beu
\fg = \fz(\bar \fn) \oplus V^- \oplus \fl \oplus V^+ \oplus \fz(\fn).
\end{equation*}
\vskip 0.05in
\noindent Note that $V^+$ and $V^-$ are irreducible $\fl$-modules,
since the Heisenberg parabolic $\fq$ is maximal
(see \cite[Exercise 5, page 638]{Knapp02} for instance).

Let $\eD_\gamma(\fg, \fh)$ be the deleted Dynkin diagram 
associated to the Heisenberg parabolic $\fq$; 
that is, the subdiagram of the Dynkin diagram of $(\fg, \fh)$ obtained by deleting
the node corresponding to the simple root that is not orthogonal to $\gamma$,
and the edges that involve it.

As on p.789 in \cite{BKZ08} the operator $\Omega_2$ is given in terms of $R$ by  

\beu
\Omega_2(Z) =  -\frac{1}{2}\sum_{\ga, \gb \in \gD(V^+)}
N_{\gb, \gb'}M_{\ga, \gb'}(Z) R(X_{-\ga})R(X_{-\gb})
\end{equation*}
\vskip 0.1in
\noindent for $Z \in \fl$. 
It follows from Theorem 5.2 of \cite{BKZ08} and 
the data tabulated in Section 8.10 of \cite{BKZ08}
that each $\Omega_2$ system associated to a singleton component of 
$\eD_\gamma(\fg,\fh)$
is conformally invariant on the line bundle $\Cal{L}_{1}$. 
The reader may want to note here that
the special values of our $\Omega_2$ system are 
of the form $-s_0$ with $s_0$ the special values of the $\Omega_2$ system
given in \cite{BKZ08},
because the parabolic $\fq$ is chosen in this paper, 
while the opposite parabolic 
$\bar \fq$ is chosen in \cite{BKZ08}.
One can also check that we have 
$\Omega_2(\Ad(l)Z) = \chi(l) l \cdot \Omega_2(Z)$
for all $l \in L_0$. Note that this is different from the $\Ad(l)$ transformation law
that appears in \cite{BKZ08}, for the same reason.
We extend the $\C$-linear maps $d\chi$, $R$, and $\Omega_2$ to be left $C^\infty(\bar N_0)$-linear so that certain relationships can be expressed more easily.

In the rest of this paper our line bundle is assumed to be $\Cal{L}_{1}$
and for simplicity we denote $\Pi_1$ by $\Pi$.
Now we define an operator $\tilde{\Omega}_3$ on $C^\infty(\bar N_0, \C_\chi)$ by

\beu
\tilde{\Omega}_3(Y) = \sum_{\ge \in \gD(V^+)} R(X_{-\ge})\Omega_2\big( [X_\ge, Y] \big)
\end{equation*}
for $Y \in V^-$. 

\begin{Lem}\label{Lem30}
Let $W_1, \ldots, W_m$ be a basis for $V^+$ and 
$W_1^*, \ldots, W_m^*$ be the $B_\fg$-dual basis of $V^-$.
Then
\begin{equation*}
\tilde{\Omega}_3(Y) = \sum_{i=1}^m R(W_i^*)\Omega_2\big( [W_i, Y] \big).
\end{equation*}
\end{Lem}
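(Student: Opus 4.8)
The plan is to show that the defining sum for $\tilde\Omega_3(Y)$ is independent of the choice of basis of $V^+$, by relating the sum over the root basis $\{X_{-\ge} : \ge \in \gD(V^+)\}$ of $V^-$ to the sum over an arbitrary basis $\{W_i^*\}$. The key point is that $\{X_\ge : \ge \in \gD(V^+)\}$ is a basis of $V^+$ whose $B_\fg$-dual basis in $V^-$ is precisely $\{X_{-\ge} : \ge \in \gD(V^+)\}$; this follows from condition (C3), which gives $B_\fg(X_\ge, X_{-\gd}) = \gd_{\ge,\gd}$ for $\ge,\gd \in \gD(V^+)$ (root spaces pair trivially unless the roots are negatives of one another, and (C3) normalizes the nontrivial pairing to $1$). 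So the defining formula for $\tilde\Omega_3$ is already an instance of the claimed formula, for one particular choice of basis.

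First I would record the change-of-basis relations. Given an arbitrary basis $W_1,\dots,W_m$ of $V^+$ with $B_\fg$-dual basis $W_1^*,\dots,W_m^*$ of $V^-$, write $W_i = \sum_{\ge} a_{i\ge} X_\ge$ and $X_{-\ge} = \sum_i b_{\ge i} W_i^*$. Pairing against the dual bases via $B_\fg$ shows that the matrices $(a_{i\ge})$ and $(b_{\ge i})$ are transposes of each other: indeed $a_{i\ge} = B_\fg(W_i, X_{-\ge}) = b_{\ge i}$. Then I would substitute into the defining sum:
\begin{equation*}
\sum_{\ge \in \gD(V^+)} R(X_{-\ge})\,\Omega_2\big([X_\ge, Y]\big)
= \sum_{\ge} \sum_{i} b_{\ge i}\, R(W_i^*)\,\Omega_2\big([X_\ge, Y]\big),
\end{equation*}
using $C^\infty(\bar N_0)$-linearity of $R$ (the coefficients $b_{\ge i}$ are scalars, so this is immediate), then interchange the finite sums and use linearity of $\Omega_2$ together with $\sum_{\ge} b_{\ge i} [X_\ge, Y] = \sum_\ge a_{i\ge}[X_\ge,Y] = [W_i, Y]$. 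This collapses the expression to $\sum_{i=1}^m R(W_i^*)\,\Omega_2\big([W_i, Y]\big)$, which is the desired identity.

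The computation is entirely routine linear algebra once the pairing fact is in hand, so there is no real obstacle; the only point requiring a moment's care is the bookkeeping that $(a_{i\ge})$ and $(b_{\ge i})$ are mutually transpose, which is exactly what makes the two sums agree. I would also note in passing that $\Omega_2$ is only defined on $\fl$, so I should remark that $[X_\ge, Y] \in \fl$ for $\ge \in \gD(V^+)$ and $Y \in V^-$ (this is the eigenvalue bookkeeping $1 + (-1) = 0$ for $\ad(H_\gamma)$), and likewise $[W_i,Y]\in\fl$; this is what makes both sides well-defined in the first place.
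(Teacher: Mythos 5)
Your proposal is correct and follows essentially the same route as the paper: both arguments reduce to the observation that the coefficients expressing $\{W_i\}$ in terms of $\{X_\ge\}$ and those expressing the dual basis $\{W_i^*\}$ in terms of $\{X_{-\ge}\}$ are related by matrix inversion (equivalently, your transposition identity $a_{i\ge}=B_\fg(W_i,X_{-\ge})=b_{\ge i}$), after which the two sums collapse into one another by bilinearity. Your added remarks --- that $\{X_{-\ge}\}$ is itself the $B_\fg$-dual basis of $\{X_\ge\}$ by (C3), and that $[X_\ge,Y]\in\fl$ so that $\Omega_2$ is applicable --- are accurate and harmless elaborations of points the paper leaves implicit.
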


\begin{proof}
Suppose that $\gD(V^+) = \{ \ge_1, \ldots, \ge_m\}$.
Each $W_i$ then may be expressed by 
\beu
W_i = \sum_{j = 1}^m a_{ij}X_{\ge_j}
\end{equation*}
\vskip 0.05in
\noindent for $a_{ij} \in \C$.
Let $[a_{ij}]$ be the change of basis matrix 
and set $[b_{ij}] = [a_{ij}]^{-1}$. 
Then define 
\beu
W^*_i = \sum_{k=1}^mb_{ki}X_{-\ge_k}
\end{equation*}
\vskip 0.05in
\noindent for $i=1,\ldots, m$. 
Since $B_\fg(X_{\ge_i}, X_{-\ge_j}) = \gd_{ij}$ with $\gd_{ij}$ the Kronecker delta,
it follows that 
\beu
B_\fg(W_i, W_j^*) = \gd_{ij}. 
\end{equation*}
\vskip 0.05in
\noindent Thus $\{W_1^*, \ldots, W_m^*\}$
is the dual basis of $\{W_1,\ldots, W_m\}$. 
Note that we have 
$\sum_{i = 1}^m b_{ki}a_{ij} = \gd_{kj}$
since $[b_{ij}][a_{ij}] = I$.
Then a direct computation shows that
\begin{align*}
\sum_{i=1}^m R(W_i^*)\Omega_2([W_i, Y])
&=\sum_{j,k=1}^m\big( \sum_{i=1}^m b_{ki}a_{ij}\big)R(X_{-\ge_k})\Omega_2([X_{\ge_j},Y])\\
&= \sum_{j=1}^m R(X_{-\ge_j})\Omega_2([X_{\ge_j}, Y]).
\end{align*}
\vskip 0.05in
\noindent
This completes the proof.
\end{proof}

\begin{Lem}\label{Lem31}
For all $l \in L_0$, $Z\in \fl$, and $Y \in V^-$, we have
\begin{equation}\label{Eqn32}
\tilde{\Omega}_3(\emph{\Ad}(l)Y) 
= \chi(l) l \cdot \tOmega_3(Y)
\end{equation} 
and
\beu
[\Pi(Z), \tOmega_3(Y)] = \tOmega_3\big([Z,Y]\big) - d\chi(Z)\tOmega_3(Y).
\end{equation*}
\vskip 0.1in
\end{Lem}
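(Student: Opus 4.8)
The plan is to establish the two identities separately, exploiting the definition $\tOmega_3(Y) = \sum_{i} R(W_i^*)\Omega_2([W_i,Y])$ from Lemma \ref{Lem30}, together with the known transformation laws for $R$ and $\Omega_2$. For the first identity, I would pick the basis $W_1,\dots,W_m$ of $V^+$ and its $B_\fg$-dual basis $W_1^*,\dots,W_m^*$ of $V^-$ and apply $\Ad(l)$: since $\Ad(l)$ preserves $B_\fg$, the bases $\Ad(l)W_i$ and $\Ad(l)W_i^*$ remain $B_\fg$-dual (this is the content of the computation already done inside the proof of Lemma \ref{Lem30}, so by that lemma $\tOmega_3$ can be computed using \emph{any} dual pair of bases). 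Then
\beu
\tOmega_3(\Ad(l)Y) = \sum_i R\big((\Ad(l)W_i)^*\big)\,\Omega_2\big([\Ad(l)W_i,\Ad(l)Y]\big)
= \sum_i R\big(\Ad(l)W_i^*\big)\,\Omega_2\big(\Ad(l)[W_i,Y]\big),
\end{equation*}
using that $\Ad(l)$ is a Lie algebra automorphism. Now substitute $R(\Ad(l)u) = l\cdot R(u)$ and $\Omega_2(\Ad(l)Z) = \chi(l)\, l\cdot\Omega_2(Z)$, both recalled in Section 2 and Section 3 respectively. Since the $L_0$-action on $\mathbb{D}(\Cal{L}_{-s})$ is by algebra automorphisms, $l\cdot(R(u)\Omega_2(Z)) = (l\cdot R(u))(l\cdot\Omega_2(Z))$, and the scalar $\chi(l)$ pulls out, giving $\chi(l)\, l\cdot\tOmega_3(Y)$ as claimed.

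For the second identity I would differentiate the first one. Taking $l = \exp(tZ)$ with $Z\in\fl_0$, differentiating at $t=0$, and using $\tfrac{d}{dt}\Ad(\exp tZ)Y|_{t=0} = [Z,Y]$, $\tfrac{d}{dt}\chi(\exp tZ)|_{t=0} = d\chi(Z)$, and the fact that the infinitesimal form of the $L_0$-action $l\cdot D$ on $\mathbb{D}(\Cal{L}_{-s})$ is $D \mapsto [\Pi(Z),D]$ (this is essentially the defining relation $(l\cdot D)\acts f = l\cdot(D\acts(l^{-1}\cdot f))$ differentiated, together with the fact that the $L_0$-action on functions differentiates to $\Pi$ restricted to $\fl$), one obtains
\beu
\tOmega_3([Z,Y]) = d\chi(Z)\,\tOmega_3(Y) + [\Pi(Z),\tOmega_3(Y)],
\end{equation*}
which rearranges to the stated formula. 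Alternatively, if one prefers to avoid the differentiation-of-group-action argument, the same identity can be derived directly by expanding $[\Pi(Z),\tOmega_3(Y)]$ using the Leibniz rule, the commutation relation $[\Pi(Z),R(X)] = R([Z,X])$ for $X\in\bar\fn$ (valid since $\fl$ normalizes $\bar\fn$; this follows from equation (\ref{Eqn21}) evaluated at $\nbar = e$ and the $\bar\fn$-invariance built into $\mathbb{D}(\Cal{L}_{-s})^{\bar\fn}$), and the analogous relation $[\Pi(Z),\Omega_2(Z')] = \Omega_2([Z,Z']) - d\chi(Z)\Omega_2(Z')$ coming from the conformal invariance of the $\Omega_2$ system, then collecting terms via the Jacobi identity $[Z,[W_i,Y]] = [[Z,W_i],Y] + [W_i,[Z,Y]]$ and re-summing using that $\sum_i R([Z,W_i^*])\Omega_2([W_i,\cdot]) + \sum_i R(W_i^*)\Omega_2([[Z,W_i],\cdot])$ recombines to $\tOmega_3([Z,\cdot])$ because $\{[Z,W_i]\}$ and $\{W_i^*\}$, resp. $\{W_i\}$ and $\{[Z,W_i^*]\}$, are related by transpose change-of-basis matrices (again invoking Lemma \ref{Lem30}).

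The main obstacle I anticipate is the bookkeeping in the direct approach: correctly matching the $\Omega_2$ transformation law — which carries the extra factor $\chi(l)$, i.e. infinitesimally the extra $-d\chi(Z)$ term — against the $R$-transformation law, which does not, so that the single $-d\chi(Z)\tOmega_3(Y)$ emerges with the right sign and is not double-counted. The cleanest route is the group-theoretic one: prove (\ref{Eqn32}) first, then differentiate, so that the consistency of the $\chi$-factors is automatic and only the standard identification of the infinitesimal $L_0$-action with $\Pi|_\fl$ is needed.
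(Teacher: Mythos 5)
Your proposal is correct and follows essentially the same route as the paper: establish the transformation law (\ref{Eqn32}) by combining Lemma \ref{Lem30} (basis independence, applied to the $\Ad(l)$-transformed dual bases) with the equivariance $l\cdot R(u)=R(\Ad(l)u)$ and the $\Omega_2$ transformation law, then obtain the bracket identity by differentiating along $l=\exp(tZ)$. Your explicit remark that $\Ad(l)$ preserves $B_\fg$, so the transformed bases remain dual, is a detail the paper leaves implicit.
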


\begin{proof}
Recall that $l \cdot R(u) = R(\Ad(l)u)$ for $l \in L_0$ and $u \in \Cal{U}(\bar\fn)$. 
Since we have $\Omega_2(\Ad(l)W) = \chi(l)l\cdot \Omega_2(W)$ for $l \in L_0$ and $W\in \fl$,
it follows that
\begin{equation}\label{Eqn33}
\chi(l) l \cdot \tOmega_3(Y)
= \sum_{\ge \in \gD(V^+)} R(\Ad(l)X_{-\ge})\Omega_2\big( [\Ad(l)X_\ge, \Ad(l)Y] \big).
\end{equation}
By Lemma \ref{Lem30}, the value of $\tilde{\Omega}_3(Y)$ is independent 
from a choice of a basis for $V^+$. Therefore
the right hand side of (\ref{Eqn33}) is equal to the sum
$\sum_{\ge \in \gD(V^+)} R(X_{-\ge})\Omega_2\big( [X_\ge, \Ad(l)Y] \big)$,
which is $\tOmega_3(\Ad(l)Y)$. 
The second equality is obtained by differentiating the first.
\end{proof}

\begin{Prop}\label{Prop34}
We have
\beu
[\Pi(X), R(Y)]_{\nbar} = 
R\big( [\emph{Ad}(\nbar^{-1})X, Y]_{V^-} \big)_{\bar{n}} - d\chi \big( [\emph{Ad}(\nbar^{-1})X, Y]_{\fl} \big)
\end{equation*}
for all $X \in \fg$, $Y \in V^{-}$, and $\bar{n} \in \bar{N}_0$.
\end{Prop}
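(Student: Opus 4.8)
The plan is to derive this formula as a direct consequence of the fundamental bracket identity \eqref{Eqn21} together with the general structure theorem, Theorem \ref{Thm24}, applied to a suitable one-dimensional $\fq$-submodule of the generalized Verma module. First I would recall that under the isomorphism \eqref{Eqn23}, the operator $R(Y)$ for $Y \in V^-$ corresponds to the element $Y \otimes 1 \in \eM(\C_{d\chi})$ (recall that our line bundle is $\Cal{L}_1$, so $s = 1$). The key observation is that $\C Y \otimes 1$ need \emph{not} be a $\fq$-submodule; rather, the relevant finite-dimensional $\fq$-submodule is $F = V^- \oplus \fz(\bar\fn)$ tensored into the Verma module, together with the image of $1 \otimes 1$. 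More precisely, for $Z \in \fq = \fl \oplus V^+ \oplus \fz(\fn)$, one computes $Z \cdot (Y \otimes 1) = [Z,Y] \otimes 1 + Y[Z] \cdot (1\otimes 1)$ inside $\eM$, where the second term arises when $[Z,Y]$ has a component in $\fq$ acting on the highest weight vector. Since $Y \in V^-$ has $\ad(H_\gamma)$-eigenvalue $-1$, for $Z \in \fl$ we get $[Z,Y] \in V^-$, while for $Z \in V^+$ we get $[Z,Y] \in \fl$, which acts on $1 \otimes 1$ by the scalar $d\chi$.

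The cleanest route, however, is to bypass the Verma module bookkeeping and argue directly from \eqref{Eqn21}. Applying $\Pi(X)$ and $R(Y)$ as operators and using \eqref{Eqn21} to express $\Pi(X) \acts f$ in terms of $d\chi$ and the right action $R$, one commutes $\Pi(X)$ past $R(Y)$. The commutator $[\Pi(X), R(Y)]$ picks up exactly two types of terms: one from differentiating the function $\Ad(\nbar^{-1})X$ along the vector field $R(Y)$ (producing a bracket $[\,\cdot\,, Y]$ inside $\bar N_0$), and the $d\chi$ scalar term. Evaluating at a point $\nbar$, the $\bar\fn$-component of $\Ad(\nbar^{-1})X$ contributes, via the right action being a genuine Lie algebra action of $\bar\fn$, the term $R([\Ad(\nbar^{-1})X, Y]_{\bar\fn})$; but since $Y \in V^-$ sits in eigenvalue $-1$ and $[\bar\fn, V^-] \subseteq V^- \oplus \fz(\bar\fn)$, while only the $V^-$ part survives in the first-order (degree-one) piece relevant to $R(Y)$ acting on the trivial bundle, one isolates $R([\Ad(\nbar^{-1})X,Y]_{V^-})$. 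The $\fl$-component of $\Ad(\nbar^{-1})X$ brackets with $Y$ to land in $V^-$ again (eigenvalue shift $0 + (-1) = -1$), and the $\fq$-component beyond $\fl$, namely $V^+ \oplus \fz(\fn)$, brackets with $Y$ to land in $\fl \oplus V^+$; the $\fl$-part produces the $-d\chi([\Ad(\nbar^{-1})X,Y]_{\fl})$ scalar term after pairing with the highest weight vector.

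Concretely, I would organize the argument as follows. Step one: use \eqref{Eqn21} to write $\Pi(X) \acts f$ at an arbitrary point and compute $([\Pi(X), R(Y)] \acts f)(\nbar)$ by the product rule, noting that $R(Y)$ differentiates both the coefficient $d\chi((\Ad(\nbar^{-1})X)_\fq)$ and the argument of $R$. Step two: identify the derivative of $\nbar \mapsto \Ad(\nbar^{-1})X$ in the direction of the right-invariant vector field generated by $Y \in \bar\fn$; this is a standard computation giving $-[\,Y, \Ad(\nbar^{-1})X\,] = [\Ad(\nbar^{-1})X, Y]$ up to the usual sign. Step three: decompose $[\Ad(\nbar^{-1})X, Y]$ according to $\fg = \bar\fn \oplus \fq = (V^- \oplus \fz(\bar\fn)) \oplus \fl \oplus (V^+ \oplus \fz(\fn))$ and observe that $\fz(\bar\fn)$ has $\ad(H_\gamma)$-eigenvalue $-2$, so $R$ evaluated on a $\fz(\bar\fn)$-element contributes a second-order phenomenon that does not appear at this order — equivalently, since $Y$ itself lies in $V^-$ and $R(Y)$ is first order, only the $V^-$-component and the scalar $\fl$-component survive. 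Step four: collect terms to obtain the stated identity.

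The main obstacle I anticipate is the careful sign and eigenvalue-component bookkeeping in step three: one must verify that the bracket $[\Ad(\nbar^{-1})X, Y]$ has no surviving $\fz(\bar\fn)$- or $(V^+ \oplus \fz(\fn))$-contribution to the formula — the former because it would be a second-order term (and indeed $[\bar\fn, \bar\fn] \cap \fz(\bar\fn)$ enters only through products $R(\cdot)R(\cdot)$, not single $R$'s), and the latter because pairing $V^+$ or $\fz(\fn)$ against the highest-weight vector $1 \otimes 1$ of the Verma module gives zero by definition of the module structure (only $\fl$ acts by scalars). Provided one keeps track of the $\ad(H_\gamma)$-grading and uses that $Y$ has pure eigenvalue $-1$, these vanishings are automatic, and the result is really just an instance of Theorem \ref{Thm24} with $k = 1$; so an alternative, shorter proof simply cites Theorem \ref{Thm24} for the $\fq$-submodule spanned by the image of $Y \otimes 1$ modulo lower terms, with $a_{11}(Z)$ read off as the coefficient by which $Z$ acts and noting $[\Ad(\nbar^{-1})X, Y]_\fl$ supplies both the $a_{11}$ and the extra correction via the relation between the $\fl$-action and $d\chi$. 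I would present the direct computation as primary and remark on the Verma-module interpretation.
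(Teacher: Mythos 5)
Your proposal is essentially correct, but your \emph{primary} route differs from the paper's. The paper does not compute directly from the explicit formula \eqref{Eqn21}; it takes what you relegate to a remark: it exhibits the finite-dimensional $\fq$-submodule $F\subseteq\eM(\C_{-d\chi})$ spanned by the $X_{-\ga}\otimes 1$ with $\ga\in\gD(V^+)$ together with $1\otimes 1$, records that $Z(X_{-\ga}\otimes 1)=[Z,X_{-\ga}]\otimes 1-d\chi(Z)X_{-\ga}\otimes 1$ for $Z\in\fl$ and $U(X_{-\ga}\otimes 1)=-d\chi([U,X_{-\ga}]_\fl)\,1\otimes 1$ for $U\in\fn$, and then reads the proposition off from Theorem \ref{Thm24}. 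Your first paragraph identifies exactly this module action (your inclusion of $\fz(\bar\fn)$ in $F$ is harmless but unnecessary, since $[\fq,V^-]$ never meets $\fz(\bar\fn)$), so that route is fine as you describe it --- modulo two slips: the module is $\eM(\C_{-d\chi})$, not $\eM(\C_{d\chi})$ (the bundle $\Cal{L}_1$ is $\Cal{L}_{-s}$ with $s=-1$), and the application of Theorem \ref{Thm24} is with $k=\dim V^-+1$, not $k=1$. The paper's route buys a uniform treatment (the same theorem is reused for Proposition \ref{Prop35}), while your direct computation from \eqref{Eqn21} is more self-contained and does go through.

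One caveat on your direct computation: your stated reason in Step three for discarding the $\fz(\bar\fn)$-contribution --- that it is a ``second-order phenomenon'' --- is not the actual mechanism, and if taken literally it would be wrong, since $R(Z)$ for $Z\in\fz(\bar\fn)$ is still a first-order operator. What actually happens is a cancellation: the product rule produces both the term $R\bigl([\Ad(\nbar^{-1})X,Y]_{\bar\fn}\bigr)$ from differentiating the coefficient $\nbar\mapsto\Ad(\nbar^{-1})X$, and the commutator $-R\bigl([(\Ad(\nbar^{-1})X)_{\bar\fn},Y]\bigr)$ from the fact that $R$ is a homomorphism on $\Cal{U}(\bar\fn)$; since $[(\Ad(\nbar^{-1})X)_{\bar\fn},Y]$ lies entirely in $\fz(\bar\fn)$, these two contributions cancel precisely in the $\fz(\bar\fn)$-component, leaving only $R\bigl([\Ad(\nbar^{-1})X,Y]_{V^-}\bigr)$. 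If you carry out Step one carefully this cancellation appears automatically, so the plan succeeds, but the justification as written should be replaced by the cancellation argument.
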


\begin{proof}
Let $F$ be the subspace of $\eM(\C_{-d\chi}) $ spanned by $X_{-\ga} \otimes 1$ 
and $1\otimes1$ with  $\ga \in \gD(V^+)$. A direct computation shows that 
$F$ is a $\fq$-submodule of $\eM(\C_{-d\chi}) $ and that for $Z \in \fl$ and $U \in \fn$ we have
\beu
Z(X_{-\ga} \otimes 1) = [Z, X_{-\ga}]\otimes 1 - d\chi(Z)X_{-\ga}\otimes 1
\end{equation*}
and
\beu
U(X_{-\ga} \otimes 1) = -d\chi([U, X_{-\ga}]_\fl)1\otimes 1.
\end{equation*}
Then it follows from Theorem \ref{Thm24} that if
$X \in \fg$ and $\big( \Ad(\nbar^{-1})X\big)_\fq = Z + U$ 
with $Z\in \fl$ and $U \in \fn$ then for $Y \in V^-$,
\beu
[\Pi(X), R(Y)]_{\nbar} = R\big([Z,Y]\big)_{\nbar} - d\chi\big([U,Y]\big).
\end{equation*}
Since $[Z,Y] = [\Ad(\nbar^{-1})X, Y]_{V^-}$ 
and $[U, X_{-\ga}]_\fl = [\Ad(\nbar^{-1})X, Y]_{\fl}$, this completes the proof.
\end{proof}

\vskip 0.1in

Let $\omega_2(X)$ denote the element in $\Cal{U}(\bar \fn)\otimes \C_{-d\chi}$
that corresponds to $\Omega_2(X)$ under $R$. 

\begin{Lem}\label{Lem33}
For $W, Z \in \fl$, we have
\beu
\omega_2\big( [Z, W] \big) = Z \omega_2(W) + 2d\chi(Z)\omega_2(W).
\end{equation*}
\end{Lem}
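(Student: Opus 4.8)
The plan is to obtain the identity by differentiating the $L_0$-equivariance of $\Omega_2$ and transporting the resulting commutator relation across the isomorphism \eqref{Eqn23}.

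Differentiating an $L_0$-identity along $l = \exp(tZ)$ with $Z \in \fl$, and using that the induced $L_0$-action on $\mathbb{D}(\Cal{L}_1)$ differentiates to the commutator with $\Pi(Z)$ --- the mechanism by which the second formula of Lemma \ref{Lem31} is deduced from the first --- produces two facts. From the transformation law $\Omega_2(\Ad(l)W) = \chi(l)\, l \cdot \Omega_2(W)$ one obtains
\beu
[\Pi(Z), \Omega_2(W)] = \Omega_2\big([Z,W]\big) - d\chi(Z)\,\Omega_2(W),
\end{equation*}
the extra term $d\chi(Z)$ coming from $\frac{d}{dt}\chi(\exp tZ)\big|_{t=0}$; and from $l \cdot R(u) = R(\Ad(l)u)$ one obtains $[\Pi(Z), R(u)] = R\big([Z,u]\big)$ for every $u \in \Cal{U}(\bar\fn)$, where $[Z,u]$ denotes the extension of $\ad(Z)|_{\bar\fn}$ to a derivation of $\Cal{U}(\bar\fn)$.

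Next I would apply the second fact with $u = \omega_2(W)$, so that $R(\omega_2(W)) = \Omega_2(W)$, and compare it with the first: since the composite map in \eqref{Eqn23} is injective, the two relations force $[Z, \omega_2(W)] = \omega_2\big([Z,W]\big) - d\chi(Z)\,\omega_2(W)$ inside $\Cal{U}(\bar\fn)$. It then remains to re-express the bracket through the $\fg$-module structure on $\eM(\C_{-d\chi})$. As in the proof of Proposition \ref{Prop34}, under the identification $\eM(\C_{-d\chi}) \cong \Cal{U}(\bar\fn)$, $u \otimes 1 \mapsto u$, one has $Z(u \otimes 1) = [Z,u] \otimes 1 - d\chi(Z)(u \otimes 1)$ for $Z \in \fl$, because $Zu = [Z,u] + uZ$ in $\Cal{U}(\fg)$ while $uZ \otimes 1 = -d\chi(Z)(u \otimes 1)$; equivalently $[Z,u] = Zu + d\chi(Z)u$ as elements of $\eM(\C_{-d\chi})$. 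Combining this with the relation $[Z, \omega_2(W)] = \omega_2\big([Z,W]\big) - d\chi(Z)\,\omega_2(W)$ found above gives $\omega_2\big([Z,W]\big) = Z\omega_2(W) + 2d\chi(Z)\,\omega_2(W)$, which is the assertion.

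I do not anticipate a genuine obstacle: the argument is essentially bookkeeping, and the only point requiring care is keeping the three actions apart --- the $L_0$-action on $\mathbb{D}(\Cal{L}_1)$, the adjoint action on $\Cal{U}(\bar\fn)$, and the $\fg$-module action on $\eM(\C_{-d\chi})$ --- and getting the signs right when moving among them; the coefficient $2$ is exactly the sum of the $d\chi$-term produced by differentiating $\chi$ and the $d\chi$-term built into the character of $\eM(\C_{-d\chi})$. As an alternative to the first step, the commutator relation for $\Omega_2$ could equally be extracted from Theorem \ref{Thm24} applied to a finite-dimensional $\fq$-submodule of $\eM(\C_{-d\chi})$ containing $\omega_2(W)$, but the differentiation argument is shorter.
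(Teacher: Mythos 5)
Your argument is correct and is essentially the paper's own proof: the paper transfers the transformation law $\Omega_2(\Ad(l)W) = \chi(l)\, l\cdot\Omega_2(W)$ to $\omega_2(\Ad(l)W) = \chi(l)\Ad(l)\omega_2(W)$ by citing Lemma 18 of \cite{BKZ09} (the $L_0$-equivariance of the isomorphism (\ref{Eqn23})) and then differentiates along $l=\exp(tZ)$, whereas you differentiate first on the operator side and transport the infinitesimal relation across $R$ by hand, with the same bookkeeping of the two $d\chi(Z)$ contributions. The only caveat is your parenthetical alternative via Theorem \ref{Thm24}, which would be circular here since Proposition \ref{Prop35} (the application of that theorem to the $\omega_2(W)$'s) is itself deduced from this lemma.
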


\begin{proof}
Since $\Omega_2(\Ad(l)W) = \chi(l)l\cdot \Omega_2(W)$ for $l \in L_0$,
we have $\omega_2(\Ad(l)W) = \chi(l)\Ad(l)\omega_2(W)$ by Lemma 18 in \cite{BKZ09}. 
Then the formula is obtained by replacing $l$ by $\exp(tZ)$ with $Z \in \fl_0$,
differentiating, and setting at $t = 0$.
\end{proof}

\begin{Prop}\label{Prop35}
We have 
\beu
[\Pi(X), \Omega_2(W) ] _{\nbar}
= \Omega_2\big([\emph{Ad}(\nbar^{-1})X, W]_{\fl}\big)_{\nbar} 
- d\chi \big((\emph{Ad}(\nbar^{-1})X)_\fl \big)\Omega_2(W)_{\nbar}
\end{equation*}
for all $X \in \fg$, $W \in \fl$, and $\nbar \in \bar{N}_0$.
\end{Prop}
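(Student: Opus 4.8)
The plan is to prove this exactly as Proposition~\ref{Prop34} was proved: transport the commutator to the generalized Verma module and invoke Theorem~\ref{Thm24}. Under the correspondence $R$ between $\eM(\C_{-d\chi})$ and $\mathbb{D}(\Cal{L}_1)^{\bar\fn}$, the operator $\Omega_2(W)$ corresponds to $\omega_2(W)$. The first step is to set
\[
F:=\spn_{\C}\{\,\omega_2(W):W\in\fl\,\}\subseteq\eM(\C_{-d\chi})
\]
and to check that $F$ is a finite-dimensional $\fq$-submodule. That $\fl$ stabilizes $F$ is exactly Lemma~\ref{Lem33}, which gives $Z\cdot\omega_2(W)=\omega_2([Z,W])-2d\chi(Z)\omega_2(W)\in F$ for $Z\in\fl$. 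That $\fn$ stabilizes $F$ comes down to the vanishing
\[
U\cdot\omega_2(W)=0\qquad(U\in\fn,\ W\in\fl);
\]
this is the one substantive point, and I return to it below.

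Granting that $F$ is a $\fq$-submodule, I would pick a basis $\omega_2(W_1),\dots,\omega_2(W_k)$ of $F$ with $W_j\in\fl$ and apply Theorem~\ref{Thm24}, using the same sign convention as in the proof of Proposition~\ref{Prop34} (for which the relevant module is $\eM(\C_{-d\chi})$, so that the term $-sd\chi(\cdot)$ of Theorem~\ref{Thm24} contributes $+d\chi(\cdot)$). Write $(\Ad(\nbar^{-1})X)_\fq=Z+U$ with $Z=(\Ad(\nbar^{-1})X)_\fl\in\fl$ and $U\in\fn$. By Lemma~\ref{Lem33} and the vanishing above, $(\Ad(\nbar^{-1})X)_\fq$ sends $\omega_2(W)$ to $\omega_2([Z,W])-2d\chi(Z)\omega_2(W)$, so Theorem~\ref{Thm24} gives
\[
[\Pi(X),\Omega_2(W)]_{\nbar}=\Omega_2([Z,W])_{\nbar}-2d\chi(Z)\,\Omega_2(W)_{\nbar}+d\chi(Z)\,\Omega_2(W)_{\nbar},
\]
where the last summand is the $-sd\chi$ term and we have used $d\chi|_\fn=0$. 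The two $d\chi$-terms combine to $-d\chi(Z)\,\Omega_2(W)_{\nbar}$; and since $[\bar\fn,\fl]\subseteq\bar\fn$, $[\fn,\fl]\subseteq\fn$, and $[\fl,\fl]\subseteq\fl$, one has $[Z,W]=[\Ad(\nbar^{-1})X,W]_\fl$. This is the assertion.

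The remaining point, which I expect to be the main obstacle, is the vanishing $U\cdot\omega_2(W)=0$ for $U\in\fn$. Since $\fn$ is generated as a Lie algebra by $V^+=\fg(1)$, it suffices to treat $U=X_\ge$ with $\ge\in\gD(V^+)$. Expanding $\omega_2(W)=-\tfrac12\sum_{\ga,\gb\in\gD(V^+)}N_{\gb,\gb'}M_{\ga,\gb'}(W)\,X_{-\ga}X_{-\gb}\otimes 1$ and moving $X_\ge$ through the two factors in $\eM(\C_{-d\chi})$---using $[X_\ge,X_{-\ga}]\in\fl$, $X_{-\ga}X_{-\gb}X_\ge\otimes 1=0$, and $Y\otimes 1=-d\chi(Y)(1\otimes 1)$ for $Y\in\fl$---reduces $X_\ge\cdot\omega_2(W)$ to an explicit element of $V^-\otimes 1$, which one checks is zero using the defining relations of the structure constants $N_{\gb,\gb'}$ and $M_{\ga,\gb'}(W)$ together with the Jacobi identity. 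In the spirit of the rest of the paper, much of this can be bypassed: each $\Omega_2$ system attached to a singleton component of $\eD_\gamma(\fg,\fh)$ is conformally invariant on $\Cal{L}_1$, so by the correspondence between conformally invariant systems in $\mathbb{D}(\Cal{L}_1)^{\bar\fn}$ and $\fq$-submodules of $\eM(\C_{-d\chi})$ the span of the associated elements $\omega_2(W)$ is already a $\fq$-submodule, whence the $\fn$-invariance follows (using that these $\omega_2(W)$ are combinations of terms $X_{-\ga}X_{-\gb}\otimes 1$ with $\ga,\gb\in\gD(V^+)$, on which $\fn$ lowers the $\Cal{U}(\bar\fn)$-degree), the remaining central direction of $\fl$ being disposed of by the short residual case of the direct computation. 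Once this vanishing is in hand, the argument above completes the proof.
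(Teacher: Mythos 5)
Your proposal follows the same strategy as the paper's proof: identify $\Omega_2(W)$ with $\omega_2(W)\in\eM(\C_{-d\chi})$, show that $F=\spn_\C\{\omega_2(W)\,:\,W\in\fl\}$ is a finite-dimensional $\fq$-submodule (with the $\fl$-action given by Lemma \ref{Lem33} and the $\fn$-action zero), and then read the formula off from Theorem \ref{Thm24}; your bookkeeping of the $-sd\chi$ term for $s=-1$ and the identification $[Z,W]=[\Ad(\nbar^{-1})X,W]_\fl$ are both correct. The one substantive point --- the vanishing $U\cdot\omega_2(W)=0$ for $U\in\fn$, which you rightly single out --- is, however, not actually established by either of your two routes. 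The direct computation with the structure constants $N_{\gb,\gb'}$ and $M_{\ga,\gb'}(W)$ is only promised, not performed (and it is exactly the kind of case-by-case work the paper is organized to avoid). The ``bypass'' has a logical gap: knowing that $F$ is a $\fq$-submodule and that $X_\ge$ lowers the $\Cal{U}(\bar\fn)$-degree of $X_{-\ga}X_{-\gb}\otimes 1$ does not by itself force $X_\ge\cdot\omega_2(W)=0$; you would still need to know that $F$ contains no nonzero element of lower degree, which is an additional claim.

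The paper closes this gap with a short weight argument that is worth internalizing. Applying Lemma \ref{Lem33} with $Z=H_\gamma$ gives $H_\gamma\,\omega_2(W)=-4\,\omega_2(W)$ for all $W\in\fl$, so $F$ lies in the $(-4)$-eigenspace of $H_\gamma$. For $U\in V^+$ the vector $U\omega_2(W)$ then has $H_\gamma$-eigenvalue $-3$, and $X_\gamma\omega_2(W)$ has eigenvalue $-2$; since $F$ is a $\fq$-submodule these vectors must lie in $F$, hence they vanish. This replaces both your structure-constant computation and your degree argument in two lines, and it also makes the ``central direction of $\fl$'' a non-issue (alternatively, one simply has $\Omega_2(H_\gamma)=0$). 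Your degree argument can in fact be repaired --- one checks that $X_\ge\cdot\omega_2(W)$ lies in $V^-\otimes 1$, while in a PBW basis each $X_{-\ga}X_{-\gb}\otimes 1$ contributes only quadratic monomials from $V^-$ and linear terms from $\fz(\bar\fn)$, so that $F\cap(V^-\otimes 1)=0$ --- but the $H_\gamma$-eigenvalue count is cleaner and is what the paper actually uses.
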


\begin{proof}
Recall that the $\Omega_2$ system is conformally invariant on the line bundle
$\Cal{L}_{1}$. Therefore $F \equiv \text{span}_\C\{  \omega_2(W) \; | \; W \in \fl\}$ is 
a $\fq$-submodule of $\eM(\C_{-d\chi}) $. By applying Lemma \ref{Lem33} with $Z = H_\gamma$, 
we obtain $H_\gamma \omega_2(W) = -4\omega_2(W)$ for all $W \in \fl$.
For $U \in V^+$ we have
$H_\gamma U\omega_2(W) = -3U\omega_2(W)$, and 
$H_\gamma X_\gamma \omega_2(W) = -2X_\gamma \omega_2(W)$ for all $W \in \fl$.
Therefore if $U \in \fn$ then
$U\omega_2(W) = 0$ for all $W \in \fl$,
because otherwise $U\omega_2(W)$ would have the wrong $H_\gamma$-eigenvalue
to lie in $F$.
Since Lemma \ref{Lem33} shows that 
\beu
Z\omega_2(W) = \omega_2([Z,W]) - 2d\chi(Z)\omega_2(W)
\end{equation*}
for $Z,W \in \fl$, the proposed formula now follows from Theorem \ref{Thm24}.
\end{proof}

\begin{Lem}\label{Lem34}
For $X \in V^+$ and $Y \in V^-$, we have 
\beu
\sum_{\ge \in \gD(V^+)}\Omega_2\big([[X, X_{-\ge}],[X_\ge, Y]]\big)
=2\Omega_2([X,Y]).
\end{equation*}
\end{Lem}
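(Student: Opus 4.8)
The plan is to transport the identity into the generalized Verma module $\eM(\C_{-d\chi})$, where it collapses to a single vanishing statement, and then to dispatch that statement by an equivariance argument together with one root-vector computation. Write $\omega_2(W)$ for the element of $\eM(\C_{-d\chi})$ attached to $\Omega_2(W)$, as in the text. For $\ge\in\gD(V^+)$ both $[X,X_{-\ge}]$ and $[X_\ge,Y]$ lie in $\fl$, so Lemma \ref{Lem33} gives
\beu
\omega_2\big([[X,X_{-\ge}],[X_\ge,Y]]\big)=[X,X_{-\ge}]\,\omega_2([X_\ge,Y])+2\,d\chi\big([X,X_{-\ge}]\big)\,\omega_2([X_\ge,Y]).
\end{equation*}
Summing over $\ge$, the assertion of the lemma becomes the equality of $\sum_\ge[X,X_{-\ge}]\,\omega_2([X_\ge,Y])$ with $2\,\omega_2([X,Y])-2\sum_\ge d\chi([X,X_{-\ge}])\,\omega_2([X_\ge,Y])$ in $\eM(\C_{-d\chi})$.

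I would next evaluate the $d\chi$-sum. Using $d\chi(Z)=B_\fg(H_\gamma,Z)$ for $Z\in\fl$ (equivalently the normalization $d\chi(H_\gamma)=2$ recorded in the proof of Proposition \ref{Prop35}), invariance of $B_\fg$, and $[H_\gamma,X]=X$ for $X\in V^+$, one gets $d\chi([X,X_{-\ge}])=B_\fg(X,X_{-\ge})$. Since $\{X_\ge\}_{\ge\in\gD(V^+)}$ and $\{X_{-\ge}\}_{\ge\in\gD(V^+)}$ are $B_\fg$-dual bases of $V^+$ and $V^-$, we have $X=\sum_\ge B_\fg(X,X_{-\ge})X_\ge$, and hence, by $\C$-linearity of $\omega_2$ and of the bracket,
\beu
\sum_{\ge\in\gD(V^+)}d\chi\big([X,X_{-\ge}]\big)\,\omega_2([X_\ge,Y])=\omega_2\big([X,Y]\big).
\end{equation*}
Therefore the lemma is equivalent to the single identity $\sum_{\ge\in\gD(V^+)}[X,X_{-\ge}]\,\omega_2([X_\ge,Y])=0$ in $\eM(\C_{-d\chi})$.

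To simplify this residual identity I would use that every element of $\fn$ annihilates $\omega_2(W)$ for $W\in\fl$, by the $H_\gamma$-eigenvalue argument in the proof of Proposition \ref{Prop35}; in particular $X\,\omega_2([X_\ge,Y])=0$ since $X\in V^+\subset\fn$. Hence $[X,X_{-\ge}]\,\omega_2([X_\ge,Y])=X\cdot\big(X_{-\ge}\,\omega_2([X_\ge,Y])\big)$, and summing over $\ge$ gives $\sum_\ge[X,X_{-\ge}]\,\omega_2([X_\ge,Y])=X\cdot\tilde{\omega}_3(Y)$, where $\tilde{\omega}_3(Y)=\sum_\ge X_{-\ge}\,\omega_2([X_\ge,Y])$ is the element of $\eM(\C_{-d\chi})$ underlying $\tOmega_3(Y)$. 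Thus the lemma says exactly that $V^+$ annihilates $\tilde{\omega}_3(Y)$ for every $Y\in V^-$.

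The remaining work is this last statement, and it is where the structure of $\f{so}(8,\C)$ enters; this is the only real obstacle, everything before it being formal manipulation inside the Verma module. I would finish it by noting that $X\cdot\tilde{\omega}_3(Y)$ lies in $F=\spn_\C\{\omega_2(W):W\in\fl\}$ (each summand $[X,X_{-\ge}]\,\omega_2([X_\ge,Y])$ does, as $F$ is an $\fl$-submodule) and that $(X,Y)\mapsto X\cdot\tilde{\omega}_3(Y)$ is $L_0$-equivariant up to the twist by $\chi$, by Lemma \ref{Lem31} and the transformation law of $\Omega_2$. Since $V^+$ is an irreducible $\fl$-module, the space of such equivariant bilinear maps $V^+\times V^-\to F$ is finite dimensional and small, so it suffices to check the identity on a short list of root-vector inputs. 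Equivalently---and this is the self-contained version I would actually write up---once one knows that $\Omega_2$ annihilates the center $\C H_\gamma$ of $\fl$, so that $F\cong\f{sl}(2)^{\oplus 3}$ via the three isolated nodes of $\eD_\gamma(\fg,\fh)$ and $V^+\cong\C^2\boxtimes\C^2\boxtimes\C^2$, the lemma is equivalent to the Lie-algebra congruence
\beu
\sum_{\ge\in\gD(V^+)}[[X,X_{-\ge}],[X_\ge,Y]]\equiv 2\,[X,Y]\pmod{\C H_\gamma},
\end{equation*}
and, both sides being $\fl$-equivariant bilinear maps $V^+\times V^-\to\fl_{ss}$ and invariant under the triality of $D_4$, this reduces by Schur's lemma to a single scalar identity, which one checks by evaluating on a pair $X=X_\ge$, $Y=X_{-\ge}$ of root vectors; there it amounts to the normalization $N_{\mu,\nu}^2=1$ of the relevant structure constants of $D_4$.
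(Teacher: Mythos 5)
Your argument is correct in its essentials but follows a genuinely different route from the paper. The paper's proof is essentially a citation: Proposition 2.2 of \cite{BKZ08} evaluates $\sum_{\ge\in\gD(V^+)}[[X,X_{-\ge}],[X_\ge,Y]]$ in general as $\tfrac12\sum_{\eC}p(\fg,\eC)\,\mathrm{pr}_{\eC}([X,Y])$ up to a multiple of $H_\gamma$, the tables in Section 8.4 of \cite{BKZ08} give $p(D_4,\eC)=4$ for every component $\eC$ of $\eD_\gamma(\fg,\fh)$, and $\Omega_2(H_\gamma)=0$ finishes the computation. You instead prove the underlying congruence $\sum_{\ge}[[X,X_{-\ge}],[X_\ge,Y]]\equiv 2[X,Y]\pmod{\C H_\gamma}$ from scratch by equivariance and Schur's lemma, which makes the lemma self-contained; and your Verma-module reformulation correctly exposes that the lemma is precisely the statement that $V^+$ annihilates $\tilde{\omega}_3(Y)$, i.e.\ that it is equivalent to Theorem \ref{Thm36} rather than merely an ingredient of it. (Your preliminary reductions --- the use of Lemma \ref{Lem33}, the evaluation $\sum_{\ge} d\chi([X,X_{-\ge}])\,\omega_2([X_\ge,Y])=\omega_2([X,Y])$, and the identity $\sum_{\ge}[X,X_{-\ge}]\,\omega_2([X_\ge,Y])=X\cdot\tilde{\omega}_3(Y)$ --- are all correct.) Two points in your final step need to be made honest. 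First, $\Hom_{\fl}\big(V^+\otimes V^-,[\fl,\fl]\big)$ is three-dimensional, one dimension for each $\f{sl}(2)$ factor of $[\fl,\fl]$ inside $\mathrm{End}(\C^2\boxtimes\C^2\boxtimes\C^2)$, so Schur's lemma alone leaves three scalars, not one; you must either justify that triality is realized by an automorphism fixing $H_\gamma$ and compatible with your normalizations, or simply verify all three $\f{sl}(2)$-components of a single well-chosen evaluation. Second, the evaluation at $X=X_\ge$, $Y=X_{-\ge}$ is not quite ``$N_{\mu,\nu}^2=1$'': it is the sum $\sum_{\ge'}N_{\ge,-\ge'}N_{\ge',-\ge}H_{\ge-\ge'}$ over the three $\ge'\in\gD(V^+)$ with $\ge-\ge'\in\gD(\fl)$, and one must check both that $N_{\ge,-\ge'}N_{\ge',-\ge}=N_{\ge,-\ge'}^{2}=1$ and that (taking $\ge$ the highest weight of $V^+$) $H_{\ga_1}+H_{\ga_3}+H_{\ga_4}=2H_\ge-H_\gamma$. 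Both do hold, so the argument closes, but these verifications are where the $D_4$-specific content lives and should be written out.
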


\begin{proof}
Since we have $||\ge||^2 = 2$ for all $\ge \in \gD(V^+)$,
it follows from Proposition 2.2 of \cite{BKZ08} that
\beu
\sum_{\ge \in \gD(V^+)}\Omega_2\big( [[X, X_{-\ge}],[X_\ge, Y]] \big)
= \frac{1}{2} \sum_{\eC}p(D_4,\eC)\Omega_2 \big( \text{pr}_{\eC}([X,Y])\big),
\end{equation*}
where $\eC$ are the connected components of $\eD_\gamma(\fg, \fh)$
as in \cite{BKZ08} and $\text{pr}_{\eC}([X,Y])$ is the projection
of $[X,Y]$ onto $\fl(\eC)$, the ideal of $[\fl, \fl]$ corresponding to $\eC$.
(See Section 2 of \cite{BKZ08} for further discussion.)
One can find in Section 8.4 of \cite{BKZ08} that 
$p(D_4, \eC) = 4$ for all the components $\eC$. Then the fact that
$\Omega_2(H_\gamma) = 0$ shows that we obtain
\beu
\sum_{\ge \in \gD(V^+)}\Omega_2\big( [[X, X_{-\ge}],[X_\ge, Y]] \big)
= 2\Omega_2 \big( [X,Y]\big),
\end{equation*}
which is the proposed formula.
\end{proof}

Now with the above lemmas and propositions
we are ready to show the following key theorem.

\begin{Thm}\label{Thm36}
We have $[\Pi(X), \tilde{\Omega}_3(Y)]_e = 0$ for all $X \in V^+$ and all $Y \in V^-$.
\end{Thm}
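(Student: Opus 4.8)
The plan is to compute $[\Pi(X),\tilde\Omega_3(Y)]_e$ directly using the product rule for the commutator, writing $\tilde\Omega_3(Y) = \sum_{\ge}R(X_{-\ge})\Omega_2([X_\ge,Y])$ and invoking Proposition \ref{Prop34} for the $R(X_{-\ge})$ factor and Proposition \ref{Prop35} for the $\Omega_2([X_\ge,Y])$ factor. Since $X \in V^+ \subset \fn$ and we evaluate at $\nbar = e$, we have $\Ad(e^{-1})X = X$, so $(\Ad(\nbar^{-1})X)_\fq = X$ with $\fl$-component zero; thus the $d\chi$-terms in both propositions drop out at $e$, and $[X, X_{-\ge}] \in \fl$ contributes through its $\fl$-part only. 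So the two propositions give, at $e$,
\begin{align*}
[\Pi(X), R(X_{-\ge})]_e &= R\big([X, X_{-\ge}]_{V^-}\big)_e = 0 \quad\text{(since $[X,X_{-\ge}]\in\fl$),}\\
[\Pi(X), \Omega_2([X_\ge,Y])]_e &= \Omega_2\big([[X, X_{-\ge}]_\fl,\ [X_\ge,Y]]\big)_e \cdot(\text{careful: wrong, see below}).
\end{align*}
Wait — more carefully, Proposition \ref{Prop35} applied with $W = [X_\ge, Y]$ and $X$ in place of its ``$X$'' gives $[\Pi(X),\Omega_2([X_\ge,Y])]_e = \Omega_2\big([X, [X_\ge,Y]]_\fl\big)_e$ only if $[X_\ge,Y]\in\fl$, which it is. But that is not quite what Lemma \ref{Lem34} is shaped for. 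The correct route is: the Leibniz rule yields
\[
[\Pi(X),\tilde\Omega_3(Y)]_e = \sum_\ge R(X_{-\ge})\,[\Pi(X),\Omega_2([X_\ge,Y])]_e + \sum_\ge [\Pi(X),R(X_{-\ge})]_e\,\Omega_2([X_\ge,Y]).
\]
The second sum vanishes termwise by Proposition \ref{Prop34} since $[X,X_{-\ge}]_{V^-}=0$. For the first sum, I need to be cautious about operator ordering versus evaluation at $e$: because $R(X_{-\ge})$ is a differential operator, $R(X_{-\ge})[\Pi(X),\Omega_2([X_\ge,Y])]$ is not simply $R(X_{-\ge})_e$ times $[\Pi(X),\Omega_2([X_\ge,Y])]_e$. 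So I expect the honest computation requires expanding $[\Pi(X), R(X_{-\ge})\Omega_2([X_\ge,Y])]$ as a genuine operator identity in $\mathbb D(\Cal L_{1})$ first — using Proposition \ref{Prop35} (or better, Lemma \ref{Lem33} at the level of $\omega_2$) in operator form — and only evaluating at $e$ at the very end.

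The cleaner execution: rewrite everything at the level of elements $\omega_2(W)\in\Cal U(\bar\fn)\otimes\C_{-d\chi}$ and use Theorem \ref{Thm24}. Let $F' = \mathrm{span}_\C\{X_{-\ge}\otimes\omega_2(W) : \ge\in\gD(V^+), W\in\fl\} + F$ where $F$ is the $\fq$-submodule from Proposition \ref{Prop35}; one checks this is $\fq$-stable using Lemma \ref{Lem33} and the fact (from the proof of Prop.\ \ref{Prop35}) that $V^+\cdot\omega_2(W)=0$. Actually the element corresponding to $\tilde\Omega_3(Y)$ is $\tilde\omega_3(Y) := \sum_\ge X_{-\ge}\omega_2([X_\ge,Y])$; apply $X\in V^+$ to it in $\Cal U(\bar\fn)\otimes\C_{-d\chi}$. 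Write $X(X_{-\ge}\,\omega_2([X_\ge,Y])) = [X,X_{-\ge}]\,\omega_2([X_\ge,Y]) + X_{-\ge}\,X\omega_2([X_\ge,Y])$. The second term vanishes since $X\in V^+\subset\fn$ annihilates every $\omega_2(\cdot)$. In the first term, $[X,X_{-\ge}]\in\fl$, and by Lemma \ref{Lem33}, $[X,X_{-\ge}]\,\omega_2([X_\ge,Y]) = \omega_2\big([[X,X_{-\ge}],[X_\ge,Y]]\big) - 2d\chi([X,X_{-\ge}])\,\omega_2([X_\ge,Y])$. Summing over $\ge$: the first piece gives $\sum_\ge\omega_2([[X,X_{-\ge}],[X_\ge,Y]]) = 2\,\omega_2([X,Y])$ by Lemma \ref{Lem34}; and $\sum_\ge d\chi([X,X_{-\ge}])$ is a scalar that I will need to identify — this is where the remaining bookkeeping lives.

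So the heart of the argument is: $X\cdot\tilde\omega_3(Y) = 2\,\omega_2([X,Y]) - 2c\,\omega_2([X,Y])\cdot(\ldots)$, and I must show the total is accounted for by Theorem \ref{Thm24}'s right-hand side at $\nbar=e$, which for an element $v$ of a $\fq$-submodule is $\sum_r a_{ri}((\Ad(e^{-1})X)_\fq)(D_r)_e - s\,d\chi((\Ad(e^{-1})X)_\fq)(D_i)_e$; with $s=1$, $\nbar=e$, and $X\in\fn$, this is just ``$D$ corresponding to $X\cdot v$'' minus the $d\chi$-term which vanishes because $d\chi$ kills $\fn$. Hence $[\Pi(X),\tilde\Omega_3(Y)]_e$ equals the operator corresponding to $X\cdot\tilde\omega_3(Y)$, evaluated at $e$; so I must show $X\cdot\tilde\omega_3(Y)$ lies in a subspace whose corresponding operators vanish at $e$ — equivalently, that $X\cdot\tilde\omega_3(Y)\in\fn\cdot\Cal U(\bar\fn)\otimes\ldots$, i.e.\ has no ``constant'' ($1\otimes1$) or low-order part surviving at $e$. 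Since $\omega_2([X,Y])$ is a degree-two element of $\Cal U(\bar\fn)$ with no constant term (the $\Omega_2$ operators are genuinely second-order), $R(\omega_2([X,Y]))$ is a second-order operator with no zeroth-order term, so its value at $e$ on any $f$ involves only derivatives of $f$ — \emph{but that is nonzero in general}.

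\textbf{The main obstacle.} This last point shows the naive computation gives $[\Pi(X),\tilde\Omega_3(Y)]_e = (2-2c)\,\Omega_2([X,Y])_e$, which is \emph{not} obviously zero — it is zero precisely when $c=1$, where $c$ is determined by $\sum_{\ge\in\gD(V^+)} d\chi([X,X_{-\ge}]) = c\,d\chi([X_\gamma, (X_\gamma\text{-partner of }Y)])$ or some such normalization, which unwinds via (C3), (C4) and the structure of the $D_4$ Heisenberg grading. So the genuine content — and the step I expect to be the real work — is the careful evaluation of the scalar $\sum_{\ge\in\gD(V^+)} d\chi([X, X_{-\ge}])$ as a multiple of $d\chi$ applied to the natural $\fl$-element attached to the pair $(X,Y)$, using the specific root combinatorics of $D_4$, and checking that the resulting coefficient makes the two contributions ($2$ from Lemma \ref{Lem34} and the $d\chi$-correction from Lemma \ref{Lem33}) cancel exactly. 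If instead the intended reading is that $\Omega_2([X,Y])_e = 0$ for structural reasons — e.g.\ $[X,Y]\in\fl$ and $\Omega_2(W)_e$ is always zero because $\Omega_2(W)$ has no zeroth-order term (true!), so $\Omega_2(W)_e = 0$ as a \emph{functional} only if evaluated against the constant function, which it is not — then one must argue at the operator level that $[\Pi(X),\tilde\Omega_3(Y)]$ is a first-order operator whose symbol vanishes, forcing it to be zeroth-order, and then that its zeroth-order coefficient is $0$. I will pursue the first approach (compute $X\cdot\tilde\omega_3(Y)$ in $\Cal U(\bar\fn)\otimes\C_{-d\chi}$ and pin down the scalar $c$), treating the identification of $c$ via $D_4$ root data as the crux.
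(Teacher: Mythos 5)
Your second, ``cleaner'' computation is essentially the paper's proof transported to the Verma-module side, and its skeleton is right: $X\cdot\tilde{\omega}_3(Y)=\sum_\ge[X,X_{-\ge}]\,\omega_2([X_\ge,Y])$ because $\fn$ kills every $\omega_2(W)$, Lemma \ref{Lem33} converts each summand into $\omega_2\big([[X,X_{-\ge}],[X_\ge,Y]]\big)-2d\chi([X,X_{-\ge}])\,\omega_2([X_\ge,Y])$, and Lemma \ref{Lem34} evaluates the first piece as $2\omega_2([X,Y])$. But the proof is not finished: you leave the scalar $c$ in $\sum_\ge d\chi([X,X_{-\ge}])\,\omega_2([X_\ge,Y])=c\,\omega_2([X,Y])$ uncomputed, declare it the crux, and even entertain the possibility that the cancellation fails. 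Since $\Omega_2([X,Y])_e$ is a genuinely nonzero functional (as you correctly observe), the theorem stands or falls with $c=1$, so this step cannot be deferred. It is, however, a one-line computation, not $D_4$ root combinatorics: write $X=\sum_\ga c_\ga X_\ga$; for $\ga\neq\ge$ the bracket $[X_\ga,X_{-\ge}]$ is a root vector in $[\fl,\fl]$, which the character $d\chi$ kills, so $d\chi([X,X_{-\ge}])=c_\ge\, d\chi(H_\ge)=c_\ge$ by the normalization $d\chi(H_\ge)=1$ for $\ge\in\gD(V^+)$; hence the sum is $\sum_\ge c_\ge\,\omega_2([X_\ge,Y])=\omega_2([X,Y])$ and $c=1$. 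This is exactly the step the paper performs (twice, once for each of the two $d\chi$-contributions in its operator-level Leibniz expansion), and with it your computation closes: $X\cdot\tilde{\omega}_3(Y)=2\omega_2([X,Y])-2\omega_2([X,Y])=0$.

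Two smaller points. First, your opening operator-level sketch is wrong where it claims $[\Pi(X),R(X_{-\ge})]_e=0$ ``since $[X,X_{-\ge}]_{V^-}=0$'': Proposition \ref{Prop34} also carries the term $-d\chi\big([\Ad(\nbar^{-1})X,X_{-\ge}]_\fl\big)$, and since $[X,X_{-\ge}]$ lies entirely in $\fl$ this contributes $-d\chi([X,X_{-\ge}])$, which sums to $-\Omega_2([X,Y])_e$ --- one of the two terms that must cancel against Lemma \ref{Lem34}. (Your Verma-module version recovers this through the $-2d\chi(Z)$ in Lemma \ref{Lem33}, so the error is superseded there, but it would sink the first route.) Second, to convert $X\cdot\tilde{\omega}_3(Y)=0$ into $[\Pi(X),\tilde{\Omega}_3(Y)]_e=0$ via Theorem \ref{Thm24} you must actually verify that your $F'$ is a finite-dimensional $\fq$-submodule; this is true (for $U\in V^+$ one has $U\cdot X_{-\ge}\omega_2(W)=[U,X_{-\ge}]\,\omega_2(W)$, which lies in the span of the $\omega_2$'s by Lemma \ref{Lem33}, and $X_\gamma\cdot X_{-\ge}\omega_2(W)=[X_\gamma,X_{-\ge}]\,\omega_2(W)=0$ since $[X_\gamma,X_{-\ge}]\in V^+$), but you only assert it. The paper sidesteps this by staying at the operator level, combining Propositions \ref{Prop34} and \ref{Prop35} with the observation that $D_e(\phi\psi)=D_e(\phi)\psi(e)$ when $D$ is first order and $\phi(e)=0$ --- precisely the ordering subtlety you flagged.
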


\begin{proof}
The commutator $[\Pi(X), \tilde{\Omega}_3(Y)]$ is a sum of two terms.
One of them is given by
\begin{align}\label{Eqn37}
&\sum_{\ge\in\gD(V^+)}[\Pi(X), R(X_{-\ge})]\Omega_2\big( [X_\ge, Y]\big)\\
&=
\sum_{\ge\in\gD(V^+)}R\big([\Ad(\cdot^{-1})X, X_{-\ge}]_{V^-}\big) \Omega_2\big( [X_\ge, Y] \big)
-\sum_{\ge\in\gD(V^+)}
 d\chi\big( [\Ad(\cdot^{-1})X, X_{-\ge}]_\fl\big) \Omega_2\big([X_\ge, Y]\big),
\nonumber
\end{align}
by Proposition \ref{Prop34}. At $e$, the first term is zero,
since $[X, X_{-\ge}]_{V^-} = 0$ for all $\ge \in \gD(V^+)$.
By writing out $X$ as a linear combination of $X_\ga$
with $\ga \in \gD(V^+)$,
one can see that at the identity the second term in (\ref{Eqn37}) evaluates to
\begin{equation*}
-\sum_{\ge\in\gD(V^+)} d\chi\big( [X, X_{-\ge}]\big) \Omega_2\big([X_\ge, Y]\big)_e
=-\Omega_2\big([X,Y]\big)_e
\end{equation*}
since $d\chi(H_\ga)=1$ for $\ga \in \gD(V^+)$. 
The other term is given by
\begin{align}\label{Eqn38}
&\sum_{\ge\in\gD(V^+)} R(X_{-\ge})\big[\Pi(X), \Omega_2( [X_\ge, Y])\big]\\
&= \sum_{\ge\in\gD(V^+)} R(X_{-\ge})\Omega_2\big([\Ad(\cdot^{-1})X,[X_\ge, Y]]_\fl\big)
- \sum_{\ge\in\gD(V^+)}
 R(X_{-\ge})d\chi\big( (\Ad(\cdot^{-1})X)_\fl \big)\Omega_2\big([X_\ge,Y]\big), \nonumber
\end{align}
by Proposition \ref{Prop35}. 
To further evaluate this expression, we make use of a simple general observation.
Namely, if $D$ is a first order differential operator, $\phi$ and $\psi$ are smooth functions,
and $\phi(e) =0$ then $D_e(\phi\psi) = D_e(\phi)\psi(e)$.
Notice that $\nbar \mapsto \ad(\Ad(\nbar^{-1})X)$ is a smooth function on $\Nbar_0$.
It follows from the left $C^\infty(\bar N_0)$-linear extension of $\Omega_2$
that the first term of the right hand side of $(\ref{Eqn38})$ can be expressed as
\beu
\sum_{\ge \in \gD(V^+)}
R(X_{-\ge}) \big( \ad(\Ad(\cdot^{-1})X)_\fl \cdot \Omega_2\big([X_\ge, Y]\big) \big),
\end{equation*} 
where $\ad(\Ad(\cdot^{-1})X)_\fl$ denotes the map $Z \mapsto [\Ad(\cdot^{-1})X, Z]_\fl$
for $Z \in \fg$.
Since we have
\beu 
\big(R(X_{-\ge})\acts (\Ad(\cdot^{-1})X)\big)(e)  = [X,X_{-\ge}],
\end{equation*}
\vskip 0.05in
\noindent
$[X, [X_\ge, Y]]_\fl = 0$, and
$X_\fl = 0$,  
the right hand side of (\ref{Eqn38}) then evaluates at the identity to
\beu
\sum_{\ge\in\gD(V^+)}\Omega_2\big([[X, X_{-\ge}],[X_\ge, Y]]\big)_e
-\sum_{\ge\in\gD(V^+)} d\chi\big( [X, X_{-\ge}]\big) \Omega_2\big([X_\ge, Y]\big)_e,
\end{equation*}
which is equivalent to
\beu
\sum_{\ge\in\gD(V^+)}\Omega_2\big([[X, X_{-\ge}],[X_\ge, Y]]\big)_e
-\Omega_2\big([X,Y]\big)_e.
\end{equation*}
Therefore we obtain
\beu
[\Pi(X), \tilde{\Omega}_3(Y)]_e
= \sum_{\ge\in\gD(V^+)}\Omega_2\big([[X, X_{-\ge}],[X_\ge, Y]]\big)_e
-2\Omega_2\big([X,Y]\big)_e.
\end{equation*}
Now it follows from Lemma \ref{Lem34} that 
$[\Pi(X), \tilde{\Omega}_3(Y)]_e = 0$.
\end{proof}

\begin{Prop}\label{Prop37}
For $Y \in V^-$, we have $[\Pi(X_\gamma), \tilde{\Omega}_3(Y)]_e = 0$.
\end{Prop}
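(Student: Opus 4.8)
The plan is to mimic the structure of the proof of Theorem \ref{Thm36}, replacing $X \in V^+$ by $X_\gamma \in \fz(\fn)$ throughout and exploiting the simplifications that come from $X_\gamma$ being the highest root vector. First I would write $[\Pi(X_\gamma), \tilde\Omega_3(Y)]$ as the sum of the two terms coming from the Leibniz rule, exactly as in (\ref{Eqn37}) and (\ref{Eqn38}), now applied with $X$ replaced by $X_\gamma$. The first term is $\sum_{\ge} [\Pi(X_\gamma), R(X_{-\ge})]\,\Omega_2([X_\ge, Y])$, which by Proposition \ref{Prop34} splits into an $R(\cdot)$-piece and a $d\chi(\cdot)$-piece. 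At $e$ the $R$-piece involves $[X_\gamma, X_{-\ge}]_{V^-}$, and since $[X_\gamma, X_{-\ge}] \in V^+$ (weight $\gamma - \ge$ is a weight of $V^+$), this projects to zero in $V^-$, so that piece vanishes. The $d\chi$-piece at $e$ involves $d\chi([X_\gamma, X_{-\ge}]_\fl)$, but $[X_\gamma, X_{-\ge}]$ lies in $V^+$ with no $\fl$-component, so this also vanishes; hence the entire first term is zero at $e$, which is already a simplification over the $X \in V^+$ case.

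Next I would handle the second term, $\sum_\ge R(X_{-\ge})\,[\Pi(X_\gamma), \Omega_2([X_\ge, Y])]$, using Proposition \ref{Prop35}, which gives an $\Omega_2([\Ad(\cdot^{-1})X_\gamma, [X_\ge,Y]]_\fl)$-piece and a $d\chi((\Ad(\cdot^{-1})X_\gamma)_\fl)\,\Omega_2([X_\ge,Y])$-piece. I would then apply the same first-order-operator observation used in Theorem \ref{Thm36}: for a first order $D$ and smooth $\phi,\psi$ with $\phi(e)=0$, $D_e(\phi\psi) = D_e(\phi)\psi(e)$. The key input is $(X_\gamma)_\fl = 0$ and $[X_\gamma, [X_\ge, Y]]_\fl = 0$ (the latter because $[X_\ge, Y] \in \fl$ forces $[X_\gamma, \fl] \subset V^+$ again, with no $\fl$-part), so both $\phi$'s vanish at $e$. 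The surviving contribution comes from differentiating the smooth coefficient $\nbar \mapsto \Ad(\nbar^{-1})X_\gamma$, and using $\big(R(X_{-\ge}) \acts \Ad(\cdot^{-1})X_\gamma\big)(e) = [X_\gamma, X_{-\ge}]$. This element lies in $V^+$, so after taking the $\fl$-bracket and $d\chi$-parts one gets $\sum_\ge \Omega_2\big([[X_\gamma, X_{-\ge}], [X_\ge, Y]]\big)_e$ from the first piece and a $d\chi\big([X_\gamma, X_{-\ge}]_\fl\big)$-weighted sum from the second piece. But $[X_\gamma, X_{-\ge}]_\fl = 0$, so the second piece contributes nothing, and we are left with $[\Pi(X_\gamma), \tilde\Omega_3(Y)]_e = \sum_\ge \Omega_2\big([[X_\gamma, X_{-\ge}], [X_\ge, Y]]\big)_e$.

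To finish, I would show this remaining sum vanishes. Here $[X_\gamma, X_{-\ge}] \in V^+$ for each $\ge \in \gD(V^+)$, so this is a sum of the form $\sum_\ge \Omega_2\big([[X_\gamma, X_{-\ge}], [X_\ge, Y]]\big)$ with the first slot ranging over $V^+$. One route is to invoke Lemma \ref{Lem34} after reindexing: write $X_\gamma$-related vectors in terms of a basis of $V^+$ and recognize the expression, but more directly, since $[X_\gamma, X_{-\ge}]$ has weight $\gamma - \ge$, pairing it with $[X_\ge, Y]$ and bracketing should telescope. The cleanest approach is probably to observe that $\sum_\ge [X_\gamma, X_{-\ge}] \otimes X_\ge$ is (up to scalar) the image of $X_\gamma$ under a canonical $\fl$-equivariant map, and use $\ad(X_\gamma)$-equivariance of $\Omega_2$ together with $\Omega_2(H_\gamma)=0$ and the vanishing $[X_\gamma, V^+] = 0$. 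Concretely, one expects $\sum_\ge [[X_\gamma, X_{-\ge}], [X_\ge, Y]]$ to reduce, via the Jacobi identity and $[X_\gamma, [X_\ge, Y]] = [[X_\gamma, X_\ge], Y] + [X_\ge, [X_\gamma, Y]] = 0$ (both terms vanish since $[X_\gamma, X_\ge] \in \fg(3) = 0$ and $[X_\gamma, Y] \in \fl$... wait, $[X_\gamma, Y] \in \fl$ is nonzero in general), to a multiple of $[X_\gamma, [X_\ge \text{-sum}, Y]]$ which lies in $V^+$, so $\Omega_2$ of it, after the $\fl$-projection implicit in $\Omega_2$, is zero. The main obstacle will be pinning down this last algebraic identity precisely and verifying the coefficient is exactly zero (not merely proportional to $\Omega_2$ of something in $V^+$); I would do this by a careful application of the Jacobi identity combined with $[X_\gamma, \fn] = 0$ and $\Omega_2|_{\fn} = 0$, and if needed fall back on the explicit $D_4$ root data to check the one scalar that remains.
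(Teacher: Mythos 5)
Your strategy---rerunning the computation of Theorem \ref{Thm36} with $X$ replaced by $X_\gamma$---is viable, and the first two thirds of your argument are correct. But the final step contains a genuine gap, and in fact a miscounting. You claim that after the dust settles you are left with $\sum_{\ge}\Omega_2\big([[X_\gamma,X_{-\ge}],[X_\ge,Y]]\big)_e$, and you then spend a paragraph groping for an identity to kill this sum (reindexing Lemma \ref{Lem34}, a ``canonical $\fl$-equivariant map,'' a Jacobi computation that you abort mid-sentence, and finally a proposed retreat to explicit $D_4$ root data). None of this is needed, and none of it as written constitutes a proof. The point you are missing is that Proposition \ref{Prop35} produces $\Omega_2$ applied to the \emph{$\fl$-projection} $[\Ad(\cdot^{-1})X,W]_\fl$. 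When $X\in V^+$, as in Theorem \ref{Thm36}, the element $[X,X_{-\ge}]$ lies in $\fl$ and the double bracket $[[X,X_{-\ge}],[X_\ge,Y]]$ lies in $\fl$, so the projection is invisible. When $X=X_\gamma$, however, $[X_\gamma,X_{-\ge}]$ lies in $V^+$ (it has $\ad(H_\gamma)$-eigenvalue $2-1=1$), so $[[X_\gamma,X_{-\ge}],[X_\ge,Y]]$ also lies in $V^+$ and its $\fl$-component is zero. The residual sum you are trying to evaluate is therefore identically zero term by term; there is no scalar left to check, and $\Omega_2$ of an element of $V^+$ is not even defined without that projection. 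Once this is observed, every contribution in your expansion vanishes at $e$ and the proposition follows. As written, though, your proof is incomplete: the step you flag as ``the main obstacle'' is exactly the step you have not done, and the routes you propose for it would not close it.

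For comparison, the paper avoids this computation entirely. Since $\fz(\fn)=[V^+,V^+]$, one writes $X_\gamma$ as a sum of brackets $[X_1,X_2]$ with $X_i\in V^+$, uses $\Pi([X_1,X_2])=[\Pi(X_1),\Pi(X_2)]$ and the Jacobi identity to express $[\Pi([X_1,X_2]),\tOmega_3(Y)]$ as a sum of double commutators, and then kills each at $e$ using Theorem \ref{Thm36} together with $\Pi(X)_e=0$ for $X\in\fn$ and the identity $(D_1D_2)_e=(D_1)_e D_2$. That argument is purely formal and requires no further root-space bookkeeping; your direct approach, once repaired as above, is a legitimate alternative but duplicates work the paper has already packaged into Theorem \ref{Thm36}.
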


\begin{proof}
Since $\fz(\fn) = [V^+, V^+]$, it suffices to show that 
$[\Pi\big([X_1, X_2]\big), \tilde{\Omega}_3(Y)]_e = 0$ for $X_1, X_2 \in V^+$.
Note that we have $\Pi\big([X_1, X_2]\big) = [\Pi(X_1), \Pi(X_2)]$, so 
it follows from the Jacobi identity that
$[\Pi\big([X_1, X_2]\big), \tilde{\Omega}_3(Y)]$ may be 
expressed as a sum of two terms. The first is
\begin{equation*}
[\Pi(X_1), [\Pi(X_2), \tilde{\Omega}_3(Y)]] 
=\Pi(X_1)[\Pi(X_2), \tilde{\Omega}_3(Y)] - [\Pi(X_2), \tilde{\Omega}_3(Y)]\Pi(X_1).
\end{equation*}
\vskip 0.05in
\noindent By (\ref{Eqn21}), we have $\Pi(X)_e = 0$ for all $X \in \fn$. 
Using this fact and Theorem \ref{Thm36}, it is obtained that 
$[\Pi(X_1), [\Pi(X_2), \tilde{\Omega}_3(Y)]] _e = 0$ 
since $(D_1 D_2)_e = (D_1)_e  D_2$ for $D_1, D_2 \in \mathbb{D}(\Cal{L}_1)$.
The second term is 
\beu
[\Pi(X_2), [\tilde{\Omega}_3(Y), \Pi(X_1)]]
=\Pi(X_2)[\tilde{\Omega}_3(Y),\Pi(X_1)] - [\tilde{\Omega}_3(Y),\Pi(X_1)]\Pi(X_2).
\end{equation*}
\vskip 0.05in
\noindent It follows from the same argument for the first term that we have
$[\Pi(X_2), [\tilde{\Omega}_3(Y),\Pi(X_1)]]_e = 0$. 
This concludes that the proposition.
\end{proof}

\begin{Thm}\label{Thm311}
Let $\fg$ be the complex simple Lie algebra of type $D_4$, 
and $\fq$ be the parabolic subalgebra of Heisenberg type.
Then the $\tilde{\Omega}_3$ system is conformally invariant on the line bundle $\Cal{L}_1$.
\end{Thm}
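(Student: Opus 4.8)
The plan is to verify the hypotheses of Proposition \ref{Prop22} for the list of operators $\tilde{\Omega}_3(Y_1), \ldots, \tilde{\Omega}_3(Y_n)$, where $Y_1, \ldots, Y_n$ is a basis of $V^-$. By Lemma \ref{Lem31}, each $\tilde{\Omega}_3(Y)$ lies in $\mathbb{D}(\Cal{L}_1)^{\bar\fn}$ (taking $Z$ in the second identity of that lemma to range over $\bar\fn$ and using $[\bar\fn, V^-] \subset \bar\fn$ together with $\Pi(\bar\fn)$ annihilating constants at $e$—more directly, $\tilde{\Omega}_3(Y)$ is built from $R$ of elements of $\Cal{U}(\bar\fn)$ and $\Omega_2$, both of which commute with $\Pi(\bar\fn)$). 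So condition (C2) reduces to producing a linear map $b : \fg \to \f{gl}(n, \C)$ with $[\Pi(X), \tilde{\Omega}_3(Y_i)]_e = \sum_j b(X)_{ji}\,\tilde{\Omega}_3(Y_j)_e$ for all $X \in \fg$.

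First I would use the triangular decomposition $\fg = \fz(\bar\fn) \oplus V^- \oplus \fl \oplus V^+ \oplus \fz(\fn)$ and check the defining commutator for $X$ in each summand. For $X \in \bar\fn = \fz(\bar\fn) \oplus V^-$ the commutator vanishes by membership in $\mathbb{D}(\Cal{L}_1)^{\bar\fn}$, so $b(X) = 0$ there. For $X \in \fl$, the second identity of Lemma \ref{Lem31} gives $[\Pi(Z), \tilde{\Omega}_3(Y)]_e = \tilde{\Omega}_3([Z,Y])_e - d\chi(Z)\tilde{\Omega}_3(Y)_e$, which is exactly of the required form with $b(Z)$ the matrix of $Y \mapsto [Z,Y] - d\chi(Z)Y$ on $V^-$. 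For $X \in V^+$, Theorem \ref{Thm36} gives $[\Pi(X), \tilde{\Omega}_3(Y)]_e = 0$, so $b(X) = 0$. For $X \in \fz(\fn)$, since $\fz(\fn)$ is spanned by $X_\gamma$, Proposition \ref{Prop37} gives $[\Pi(X_\gamma), \tilde{\Omega}_3(Y)]_e = 0$, so again $b(X) = 0$. Assembling these four cases and extending $\C$-linearly produces the desired $b$, and by Proposition \ref{Prop22} the system is conformally invariant with structure operator $C(X)(\nbar) = b(\Ad(\nbar^{-1})X)$.

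It remains to check the linear independence condition (C1), i.e. that $\tilde{\Omega}_3(Y_1)_e, \ldots, \tilde{\Omega}_3(Y_n)_e$ are linearly independent functionals—equivalently, via the $L_0$-equivariant isomorphism (\ref{Eqn23}), that the map $Y \mapsto \tilde{\Omega}_3(Y)$ from $V^-$ to $\Cal{U}(\bar\fn) \otimes \C_{-d\chi}$ is injective. Here I would argue that $Y \mapsto \tilde{\Omega}_3(Y)$ is nonzero by a direct computation using the explicit formula for $\Omega_2$ in type $D_4$ (for instance, picking a single root vector $Y = X_{-\ge_0} \in V^-$ and identifying the leading third-order term, which is a nonzero sum of $R(X_{-\ga})R(X_{-\gb})R(X_{-\delta})$-type terms), together with the irreducibility of $V^-$ as an $\fl$-module and the $L_0$-equivariance (\ref{Eqn32}): the map $Y \mapsto \tilde{\Omega}_3(Y)$ intertwines the $\fl$-actions up to the character twist, so its kernel is an $\fl$-submodule of the irreducible $V^-$, hence is $0$ once we know the map is not identically zero.

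The main obstacle is this last injectivity/non-vanishing check: everything else is assembled from the lemmas already proved, but confirming that $\tilde{\Omega}_3$ does not collapse to zero requires actually engaging with the type-$D_4$ structure constants and the explicit form of $\Omega_2$. One must be careful that no cancellation occurs among the third-order terms; the cleanest route is probably to exhibit one explicit nonzero coefficient in the leading symbol, after which irreducibility of $V^-$ finishes the argument cleanly. (Alternatively, one can note that if $\tilde{\Omega}_3$ were identically zero the "system" would be empty and the theorem would be vacuous or false, so this step is genuinely the crux of the existence claim.)
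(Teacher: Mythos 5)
Your proposal follows essentially the same route as the paper: the paper's proof of Theorem \ref{Thm311} is exactly your first two paragraphs, combining Lemma \ref{Lem31} for $\fl$, Theorem \ref{Thm36} and Proposition \ref{Prop37} for $\fn$, the definition of $\tilde{\Omega}_3$ for $\bar\fn$, and then Proposition \ref{Prop22}. The linear-independence condition (C1) that you flag as the remaining crux is in fact left implicit in the paper's proof as well, so your sketch (non-vanishing of one coefficient plus $\fl$-irreducibility of $V^-$ and the equivariance (\ref{Eqn32})) is a legitimate supplement rather than a divergence.
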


\begin{proof}
For $Y \in V^-$, it follows from Lemma \ref{Lem31} that 
\beu
[\Pi(Z), \tOmega_3(Y)]_e = \tOmega_3\big([Z,Y]\big)_e - d\chi(Z)\tOmega_3(Y)_e
\end{equation*}
for all $Z \in \fl$. 
Also Theorem \ref{Thm36} and Proposition \ref{Prop37} show that
$[\Pi(U),\tOmega_3(Y)] = 0$ for all $U \in \fn$.
By the definition of $\tOmega_3(Y)$,
it is clear that $[\Pi(\bar U),\tOmega_3(Y)]_e = 0$ for all $\bar U \in \bar\fn$.
Now by applying Proposition \ref{Prop22} we conclude that the $\tOmega_3$ system
is conformally invariant on $\Cal{L}_1$. 
\end{proof}

Let $\omega_3(Y)$ denote the element in 
$\Cal{U}(\bar \fn)\otimes \C_{-d\chi}$ that corresponds to $\tOmega_3(Y)$ under $R$.
Theorem \ref{Thm311}  then implies that 
$E \equiv \text{span}_\C\{\omega_3(Y)\; | \; Y \in V^-\}$ 
is a $\fq$-submodule of $\eM(\C_{-d\chi})$.
Note that it follows from (\ref{Eqn32}) that 
we have $\omega_3(\Ad(l)Y) = \chi(l)\Ad(l)\omega_3(Y)$ for $l \in L_0$.
By using the $\Ad(l)$ transformation law,
one can check that a map $Y \otimes 1 \mapsto \omega_3(Y)$
from $V^- \otimes \C_{-d\chi}$ to $E$ is $L_0$-equivariant
with the standard action of $L_0$ on $V^- \otimes \C_{-d\chi}$.
In particular, $E$ is an irreducible $\fl$-module, because $V^-$ is $\fl$-irreducible.
Since $\omega_3$ has the same $\Ad(l)$ transformation law as $\omega_2$,
we have 
\begin{equation}\label{Eqn313}
\omega_3\big( [Z, Y] \big) = Z \omega_3(Y) + 2d\chi(Z)\omega_3(Y)
\end{equation}
for $Y \in V^-$ and $Z \in \fl$.  
The same argument in the proof of Proposition \ref{Prop35} then shows that 
$\fn$ acts on $E$ trivially.
Hence, $E$ is a leading $\fl$-type in the sense of \cite[page 808]{BKZ09}.

Now there exists a non-zero $\Cal{U}(\fg)$-homomorphism from 
a generalized Verma module $\Cal{U}(\fg) \otimes_{\Cal{U}(\fq)}E$ 
to $\eM(\C_{-d\chi})$, that is given by 
\beu
u \otimes \omega_3(Y) \mapsto u \cdot \omega_3(Y).
\end{equation*}
It follows from (\ref{Eqn313}) that $H_\gamma$ acts on $E$ by $-5$,
while it acts on $\C_{-d\chi}$ by $-2$; in particular, $E$ is not equivalent to $\C_{-d\chi}$. 
We now conclude the following corollary.

\begin{Cor}
Let $\fg$ be the complex simple Lie algebra of type $D_4$, 
and $\fq$ be the parabolic subalgebra of Heisenberg type.
Then the generalized Verma module $\eM(\C_{-d\chi})$ is reducible.
\end{Cor}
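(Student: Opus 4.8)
The plan is to deduce the corollary directly from the constructions already in place, with essentially no further computation. The key point is that Theorem \ref{Thm311}, via the isomorphism (\ref{Eqn23}) between $\eM(\C_{-d\chi})$ and $\mathbb{D}(\Cal{L}_1)^{\bar\fn}$ together with its inverse passage through $\Cal{U}(\bar\fn)$, produces the subspace $E = \text{span}_\C\{\omega_3(Y)\mid Y\in V^-\}$ of $\eM(\C_{-d\chi})$, and the discussion following Theorem \ref{Thm311} establishes that $E$ is a leading $\fl$-type: it is $\fl$-irreducible (being $L_0$-equivariantly isomorphic to $V^-\otimes\C_{-d\chi}$, with $V^-$ an irreducible $\fl$-module) and is annihilated by $\fn$. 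First I would invoke the universal property of the generalized Verma module $N \equiv \Cal{U}(\fg)\otimes_{\Cal{U}(\fq)}E$: since $E$ is a $\fq$-submodule of $\eM(\C_{-d\chi})$, the inclusion $E\hookrightarrow\eM(\C_{-d\chi})$ extends uniquely to a $\Cal{U}(\fg)$-homomorphism $\varphi: N \to \eM(\C_{-d\chi})$, $u\otimes\omega_3(Y)\mapsto u\cdot\omega_3(Y)$, and this map is nonzero because $E\neq 0$.

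Next I would identify the image of $\varphi$ as a proper nonzero $\Cal{U}(\fg)$-submodule of $\eM(\C_{-d\chi})$. It is nonzero since $\varphi(1\otimes\omega_3(Y)) = \omega_3(Y)\neq 0$ for $Y\neq 0$. For properness, the cleanest argument uses the $H_\gamma$-grading: by (\ref{Eqn313}) with $Z = H_\gamma$ one has $H_\gamma\omega_3(Y) = -5\,\omega_3(Y)$, whereas the generating line $\C_{-d\chi} = \C(1\otimes 1)$ of $\eM(\C_{-d\chi})$ carries $H_\gamma$-eigenvalue $-2$ (since $d\chi(H_\gamma) = 1$ and the module is built from $\C_{sd\chi}$ with $s = -1$ in our normalization). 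The image of $\varphi$ is spanned by $\Cal{U}(\bar\fn)$-translates of $E$, hence lies in the sum of $H_\gamma$-eigenspaces of $\eM(\C_{-d\chi})$ with eigenvalues $\leq -5$; in particular it does not contain the generating vector $1\otimes 1$, which has eigenvalue $-2$, so the image is a proper submodule. Therefore $\eM(\C_{-d\chi})$ has a nonzero proper $\Cal{U}(\fg)$-submodule and is reducible.

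I do not anticipate a genuine obstacle here — the substantive work is Theorem \ref{Thm311} — but the one point requiring care is verifying that the map $\varphi$ is genuinely nonzero \emph{and} proper, i.e.\ that $\omega_3(Y)$ is not itself a scalar multiple of the generating vector or otherwise degenerate. This is exactly what the $H_\gamma$-eigenvalue comparison ($-5$ versus $-2$) rules out, so the observation that $E\not\cong\C_{-d\chi}$ as $\fq$-modules, already recorded in the text preceding the corollary, is precisely the ingredient that makes the image proper. One could alternatively phrase properness via the highest weight: the cyclic vector of $\eM(\C_{-d\chi})$ generates a module whose unique maximal submodule would have to be avoided by $1\otimes 1$, but the graded argument is more transparent and self-contained, so that is the route I would write up.
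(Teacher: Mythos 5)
Your proposal is correct and follows essentially the same route as the paper: the text preceding the corollary constructs the nonzero homomorphism $\Cal{U}(\fg)\otimes_{\Cal{U}(\fq)}E\to\eM(\C_{-d\chi})$ and records the $H_\gamma$-eigenvalue comparison ($-5$ on $E$ versus $-2$ on $\C_{-d\chi}$), and you merely make explicit the step the paper leaves implicit, namely that the image lies in eigenvalues $\leq -5$ and therefore misses the cyclic vector $1\otimes 1$, so it is a proper nonzero submodule. One small slip: your parenthetical gives $d\chi(H_\gamma)=1$, whereas the eigenvalue $-2$ on $\C_{-d\chi}$ requires $d\chi(H_\gamma)=2$ (the paper's normalization gives $d\chi(H_\alpha)=1$ only for $\alpha\in\gD(V^+)$); this does not affect the argument, since all that matters is $-2\neq -5$.
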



\bibliography{Draft6}

\end{document}